\numberwithin{equation}{section}
\definecolor{darkgreen}{rgb}{0,0.45,0}
  \newtheorem{proposition}{Proposition}[section]
  \newtheorem{lemma}[proposition]{Lemma}
  \newtheorem{corollary}[proposition]{Corollary}
  \newtheorem{theorem}[proposition]{Theorem}
  \theoremstyle{definition}
  \newtheorem{definition}[proposition]{Definition}
  \newtheorem{example}[proposition]{Example}
  \newtheorem{assumption}[proposition]{Assumption}
\theoremstyle{remark}
  \newcounter{c}
  \renewcommand{\[}{\setcounter{c}{1}$$}
  \newcommand{\etyk}[1]{\vspace{-7.4mm}$$\begin{equation}\Label{#1}
  \addtocounter{c}{1}}
  \renewcommand{\]}{\ifnum \value{c}=1 $$\else \end{equation}\fi}
\newcommand*{\inlineequation}[2][]{%
  \begingroup
    \refstepcounter{equation}%
    \ifx\\#1\\%
    \else
      \label{#1}%
    \fi
    \relpenalty=10000 %
    \binoppenalty=10000 %
    \ensuremath{%
      #2%
    }%
    ~\@eqnnum
  \endgroup
}
\def\@settitle{\begin{center}%
  \baselineskip14\p@\relax
  \bfseries
  \uppercasenonmath\@title
  \@title
  \ifx\@subtitle\@empty\else
     \\[1ex]\uppercasenonmath\@subtitle
     \footnotesize\mdseries\@subtitle
  \fi
  \end{center}%
}
\def\subtitle#1{\gdef\@subtitle{#1}}
\def\@subtitle{}
\newcommand{\coten}[1]{\raisebox{-7pt}{\ensuremath{\stackrel{\displaystyle  \Box}{\scriptstyle { #1}}}}}
\newcommand{\morcoten}{\scalebox{.7}{\ensuremath{\,\Box \,}}}
\newcommand{\diagcoten}{\scalebox{.55}{\ensuremath \Box}}
\newcommand{\expcoten}[1]{\scalebox{.55}{
{\raisebox{-8pt}{\ensuremath{\stackrel{\displaystyle  \Box}{\scriptstyle { #1}}}}}}}
\begin{document}

\title[Crossed modules of monoids III.]{Crossed modules of monoids III.}
\subtitle{Simplicial monoids of Moore length 1}

\author{Gabriella B\"ohm} 
\address{Wigner Research Centre for Physics, H-1525 Budapest 114,
P.O.B.\ 49, Hungary}
\email{bohm.gabriella@wigner.mta.hu}
\date{March 2018}
  
\begin{abstract}
This is the last part of a series of three strongly related papers in which
three equivalent structures are studied:
\begin{itemize}
\item[-] internal categories in categories of monoids; defined in terms of
pullbacks relative to a chosen class of spans
\item[-] crossed modules of monoids relative to this class of spans 
\item[-] simplicial monoids of so-called Moore length 1 relative to this class
  of spans. 
\end{itemize}
The most important examples of monoids that are covered are small categories
(treated as monoids in categories of spans) and bimonoids in symmetric
monoidal categories (regarded as monoids in categories of comonoids). 
In this third part relative simplicial monoids are analyzed. 
Their Moore length is introduced and the equivalence is proven between
relative simplicial monoids of Moore length 1, and relative categories of
monoids in Part I. This equivalence is obtained in one direction by truncating
a simplicial monoid at the first two degrees; and in the other direction by
taking the simplicial nerve of a relative category. 
\end{abstract}
  
\maketitle


\section*{Introduction} \label{sec:intro}

Whitehead's crossed modules of groups \cite{Whitehead} received a lot of
attention because of their appearance in many different contexts; see the
review articles \cite{Porter:Menagerie,Porter:HQFT,Paoli} and the references
in them. In many of the applications they did not appear in their original
form; but in the disguise of the equivalent notion of strict 2-group (see
\cite{BrownSpencer,Duskin,Janelidze} for proofs of the equivalence).

Groups can be regarded as (distinguished) monoids in the Cartesian monoidal
category of sets.
In our antecedent papers \cite{Bohm:Xmod_I} and \cite{Bohm:Xmod_II} we worked
out the notion of crossed module of monoids in more general, not necessarily
Cartesian monoidal categories, relative to a chosen suitable class of spans.
The main examples described by the theory are crossed modules of groupoids in
\cite{BrownIcen} and crossed modules of Hopf monoids in
\cite{Aguiar,Villanueva,Majid,FariaMartins}. 

The aim of this article is to extend to this level of generality a third
equivalent description of crossed modules of groups: as simplicial groups
whose Moore complex has length 1.

We consider simplicial monoids; that is, functors from the opposite of the
simplicial category $\Delta$ to the category of monoids in some monoidal
category $\mathsf C$. Assuming the existence of certain relative pullbacks
(cf. \cite{Bohm:Xmod_I}), in Section \ref{sec:Moore} we associate to a
simplicial monoid  a sequence of morphisms which yields a chain complex
whenever the monoidal unit of $\mathsf C$ is a terminal (hence zero) object in the category of monoids in $\mathsf C$. It can be seen
as a generalization of the Moore complex of a simplicial group. We also give a
meaning to its length and study the consequences of its having some finite
length $l$. 

Section \ref{sec:iso} contains some technical material about the
invertibility of a certain canonical morphism playing essential role in the
theory. 

The main result can be found in Section \ref{sec:equivalence} where we  
prove an equivalence between the category of relative categories in the
category of monoids in $\mathsf C$ (cf. \cite{Bohm:Xmod_I}) and the category
of relative simplicial monoids in $\mathsf C$ whose Moore length is 1. 
The functors establishing the equivalence carry transparent meanings. For obtaining a relative category from a simplicial monoid we truncate it at the
first two degrees. In the opposite direction, a simplicial monoid is obtained
from a relative category of monoids as the (relative) simplicial nerve. 

In the particular monoidal category of spans over a given set, we obtain an
equivalence between the categories of certain double categories, and of certain
simplicial categories. These equivalent categories contain, as equivalent full
subcategories, the category of 2-groupoids on one hand, and the category of
simplicial groupoids with length 1 Moore complex on the other hand.

In the particular monoidal category of comonoids in some symmetric monoidal
category $\mathsf M$, we obtain an equivalence between the categories of
certain category objects in the category of bimonoids in $\mathsf M$, and of
certain simplicial bimonoids. These equivalent categories contain, as
equivalent full subcategories, the category of internal categories in the
category of cocommutative Hopf monoids in $\mathsf M$ on one hand, and the
category of simplicial cocommutative Hopf monoids of  length 1 Moore complex on the
other hand. This includes, in particular, the equivalence in \cite{Emir}
between the category objects in the category of cocommutative Hopf algebras
over a field, and the category of simplicial cocommutative Hopf algebras whose
Moore complex has length 1.   

\subsection*{Acknowledgement} 
The author's interest in the subject was triggered by the excellent workshop
{\em `Modelling Topological Phases of Matter -- TQFT, HQFT, premodular and
  higher categories, Yetter-Drinfeld and crossed modules in disguise'} in
Leeds UK, 5-8 July 2016. It is a pleasure to thank the organizers,
Zolt\'an K\'ad\'ar, Jo\~ao Faria Martins, Marcos Cal\c{c}ada and Paul Martin
for the experience and a generous invitation.
Financial support by the Hungarian National Research, Development and
Innovation Office – NKFIH (grant K124138) is gratefully acknowledged.


\section{The Moore length}
\label{sec:Moore}

In the category of groups (and in more general semi-Abelian categories
\cite{TVdL}), a chain complex --- the so called {\em Moore complex} --- can be 
associated to any simplicial object. The full subcategory of those simplicial
objects whose Moore complex has length 1, turns out to be equivalent to the
category of crossed modules and to the category of internal categories, see
e.g. \cite{Porter:Menagerie}.

However, as in \cite{Bohm:Xmod_I} and \cite{Bohm:Xmod_II}, here we work in categories (of monoids in some monoidal category $\mathsf
C$) where the existence of zero objects is not assumed. So the notion of chain
complex is not available. We shall see below, however, that when certain
relative pullbacks exist, we can still associate to a simplicial object $S$ a
sequence of composable morphisms (reproducing the Moore complex whenever the
monoidal unit of $\mathsf C$ is terminal in the category of monoids in
$\mathsf C$). Although the chain condition can not be formulated at this level
of generality, there is a natural way to define the length of this sequence
that we call the {\em Moore length} of the simplicial monoid $S$. 

The construction of this sequence of composable morphisms is based on the
following assumption.  

\begin{assumption} \label{ass:Moore}
Let $\mathcal S$ be a monoidal admissible class of spans --- in the sense of \cite[Definitions 2.1 and 2.5]{Bohm:Xmod_I} --- in some monoidal category $\mathsf C$. For any simplicial object 
\begin{equation} \label{eq:simp_obj}
\xymatrix@C=30pt{
S_0 \ar[r]|-{\,\sigma_0\,} &
\ar@<-5pt>[l]_-{\partial_0}\ar@<5pt>[l]^-{\partial_1} S_1
\ar@<7pt>[r]|-{\,\sigma_0 \,} \ar@<-7pt>[r]|-{\,\sigma_1 \,} &
\ar[l]|-{\,\partial_1 \,}
\ar@<-12pt>[l]_-{\,\partial_0 \,}\ar@<12pt>[l]^-{\,\partial_2 \,}
S_2 \dots S_{n-1} 
\ar@<14pt>[r]|-{\,\sigma_0 \,} \ar@<-17pt>[r]|-{\,\sigma_{n-1} \,} &
\ar@<-21pt>[l]_-{\,\partial_0 \,} \ar@<-7pt>[l]|-{\,\partial_1 \,}^-\vdots
\ar@<24pt>[l]^-{\,\partial_n \,}
S_n \dots}
\end{equation}
in the category of monoids in $\mathsf C$, consider the following successive
assumptions.  
\begin{itemize}
\item[{(1)}] Assume that for any positive integer $n$ there exists the
  $\mathcal S$-relative pullback --- in the sense of \cite[Definition 3.1]{Bohm:Xmod_I} --- $S_n^{(1)}$ in 
$$
\xymatrix@C=10pt@R=10pt{
S_{n+1}^{(1)} \ar@/^1.2pc/[rrrd]^-{p_I} \ar[ddd]_-{p_{S_{n+1}}}
\ar@{-->}[rd]^(.55){\partial_k^{(1)}} \\
& S_n^{(1)}:=S_n \coten {S_{n-1}} I \ar[rr]^-{p_I} \ar[dd]_-{p_{S_n}} &&
I \ar[dd]^-u \\
\\
S_{n+1} \ar[r]_-{\partial_k} &
S_n \ar[rr]_-{\partial_n} &&
S_{n-1}.}
$$
Since $\partial_k$ is compatible with the units of the monoids $S_n$ and
$S_{n-1}$; and by the simplicial relation
$\partial_n.\partial_k=\partial_k.\partial_{n+1}$ for any $0\leq k \leq n$, we 
may apply \cite[Proposition 3.5~(1)]{Bohm:Xmod_I} to conclude on the
existence of the unique morphism $\partial_k^{(1)}:=\partial_k \morcoten 1$
rendering commutative the above diagram. 
\item[{(2)}] In addition to the assumption in item (1) above, assume that for
  all $n\geq 2$ there exists the $\mathcal S$-relative pullback $S_n^{(2)}$ in 
$$
\xymatrix@C=10pt@R=10pt{
S_{n+1}^{(2)} \ar@/^1.2pc/[rrrd]^-{p_I} \ar[ddd]_-{p_{S_{n+1}^{(1)}}}
\ar@{-->}[rd]^(.55){\partial_k^{(2)}} \\
& S_n^{(2)}:=S_n^{(1)} \coten {S_{n-1}^{(1)}} I \ar[rr]^-{p_I} 
\ar[dd]_-{p_{S_n}^{(1)}} &&
I \ar[dd]^-u \\
\\
S_{n+1}^{(1)} \ar[r]_-{\partial_k^{(1)}} &
S_n^{(1)} \ar[rr]_-{\partial_{n-1}^{(1)}} &&
S_{n-1}^{(1)}.}
$$
Since $\partial_k^{(1)}$ is compatible with the units of the monoids
$S_n^{(1)}$ and $S_{n-1}^{(1)}$; and by the simplicial relation
$\partial_{n-1}.\partial_k=\partial_k.\partial_n$ for any $0\leq k \leq n-1$, we 
may apply \cite[Proposition 3.5~(1)]{Bohm:Xmod_I} to conclude on the
existence of the unique morphism $\partial_k^{(2)}:=\partial_k^{(1)} \morcoten 1$
rendering commutative the above diagram. 
\item[{\vdots}]
\item[{(l)}] In addition to the assumptions in all items (1)\dots (l-1) above,
  assume that for all $n\geq l$ there exists the $\mathcal S$-relative
  pullback $S_n^{(l)}$ in  
$$
\xymatrix@C=10pt@R=10pt{
S_{n+1}^{(l)} \ar@/^1.2pc/[rrrd]^-{p_I} \ar[ddd]_-{p_{S_{n+1}^{(l-1)}}}
\ar@{-->}[rd]^(.55){\partial_k^{(l)}} \\
& S_n^{(l)}:=S_n^{(l-1)} \coten {S_{n-1}^{(l-1)}} I \ar[rr]^-{p_I} 
\ar[dd]_-{p_{S_n}^{(l-1)}} &&
I \ar[dd]^-u \\
\\
S_{n+1}^{(l-1)} \ar[r]_-{\partial_k^{(l-1)}} &
S_n^{(l-1)} \ar[rr]_-{\partial_{n-(l-1)}^{(l-1)}} &&
S_{n-1}^{(l-1)}.}
$$
Since $\partial_k^{(l-1)}$ is compatible with the units of the monoids
$S_n^{(l-1)}$ and $S_{n-1}^{(l-1)}$; and by the simplicial relation
$\partial_{n-(l-1)}.\partial_k=\partial_k.\partial_{n+1-(l-1)}$ for any $0\leq
k \leq n-(l-1)$, we may apply \cite[Proposition 3.5~(1)]{Bohm:Xmod_I}
to conclude on the existence of the unique morphism
$\partial_k^{(l)}:=\partial_k^{(l-1)} \morcoten 1$ 
rendering commutative the above diagram. 
\item[{\vdots}]
\end{itemize}
\end{assumption}

\begin{example} \label{ex:ass_Moore_CoMon}
For this example note that in any monoidal category $\mathsf M$ the
monoidal unit $I$ carries a trivial monoid structure which is initial in the
category of monoids in $\mathsf M$. 
Symmetrically, $I$ carries a trivial comonoid structure which is terminal in
the category of comonoids in $\mathsf M$. 
Whenever $\mathsf M$ is braided, the trivial monoid and comonoid structures
of $I$ combine to a bimonoid which is thus the zero object in the category of
bimonoids in $\mathsf M$. 

If moreover $\mathsf M$ has equalizers which are preserved by taking the
monoidal product with any object, then the category of bimonoids in $\mathsf M$
has equalizers --- see \cite[Example 3.3]{Bohm:Xmod_I} --- and thus
kernels. The kernel of any bimonoid morphism 
$\xymatrix@C=12pt{A \ar[r]^-f &B}$ is computed as the equalizer in $\mathsf M$ 
of  
\begin{equation} \label{eq:fhat}
\widehat f:=\xymatrix@C=12pt{
A \ar[r]^-\delta &
A^2 \ar[r]^-{\delta 1} &
A^3 \ar[r]^-{1f1} &
ABA}
\end{equation}
and 
$\widehat{u.\varepsilon}=
\xymatrix@C=12pt{A \ar[r]^-\delta & A^2 \ar[r]^-{1u1} & ABA}$,
see \cite{AndDev}.

So let $\mathsf M$ be a symmetric monoidal category in which equalizers
exist and are preserved by taking the monoidal product with
any object. Let $\mathsf C$ be the monoidal category of comonoids in $\mathsf
M$ and consider the monoidal admissible class $\mathcal S$ of spans in
$\mathsf C$ from \cite[Example 2.3]{Bohm:Xmod_I}.
Since $\varepsilon$ is the counit,
$$
\xymatrix@C=55pt{
A \ar[r]^-\delta \ar[d]_-\delta \ar[rrd]_-g &
A^2 \ar[r]^-c\ar[rd]^(.65){\varepsilon g} &
A^2 \ar[d]^-{g\varepsilon} \\
A^2 \ar[rr]_-{g\varepsilon} &&
B}
$$
commutes for any bimonoid morphism $g$ proving that
\begin{equation}\label{eq:eps_in_S}
\xymatrix@C=15pt{B & \ar[l]_-g A
\ar[r]^-\varepsilon & I}\in \mathcal S.
\end{equation}

From Example 3.3 and Proposition 3.7 in \cite{Bohm:Xmod_I} we know that all
$\mathcal S$-relative pullbacks exist in the category of bimonoids in $\mathsf
M$; hence
any simplicial bimonoid in $\mathsf M$
--- that is, any functor from $\Delta^{\mathsf{op}}$ to the category of
monoids in $\mathsf C$ --- satisfies the successive assumptions of Assumption
\ref{ass:Moore} for any positive integer. Still --- say, for an easier
comparison with \cite{Emir} --- below we present a more explicit description of
the objects $S_n^{(k)}$.

For any positive integer $n$ and any
$0< k \leq n$ the desired objects $S_n^{(k)}$ are constructed as the joint
kernels of the morphisms $\{\partial_n,\partial_{n-1},\dots , \partial_{n-k+1}\}$
in the category of bimonoids in $\mathsf M$; that is, as the joint equalizers
$$
\xymatrix@C=25pt{
S_n^{(k)} \ar[r]^-{j_n^{(k)}} &
S_n \ar@/^2.1pc/[rrr]^-{\widehat \partial_n}
\ar@/^1.4pc/[rrr]|-{\,\widehat\partial_{n-1}\,}_-\vdots
\ar@/_.7pc/[rrr]|-{\,\widehat\partial_{n-k+1}\,}
\ar@/_1.4pc/[rrr]_-{1u1.\delta} &&&
S_nS_{n-1}S_n}
$$
in $\mathsf M$ (where the hat notation of \eqref{eq:fhat} is used). By construction they are bimonoids.
Using the universality of the equalizer (in $\mathsf M$) in the bottom rows,
for $k>1$ we construct bimonoid morphisms in
$$
\xymatrix@C=20pt@R=45pt{
& S_n^{(k)} \ar[d]^-{j_n^{(k)}} \ar@{-->}[ld]_-{p_{S_n^{(k-1)}}} \\
S_n^{(k-1)} \ar[r]_-{j_n^{(k-1)}} &
S_n \ar@/^1.9pc/[rr]^-{\widehat\partial_n}
\ar@/^1.35pc/[rr]|(.55){\,\widehat\partial_{n-1}\,}_-\vdots
\ar@/_.55pc/[rr]|-{\,\widehat\partial_{n-k+2}\,}
\ar@/_1.2pc/[rr]_-{1u1.\delta} &&
S_nS_{n-1}S_n}
\xymatrix@C=20pt@R=45pt{
S_n^{(k-1)} \ar@{-->}[d]_-{\partial_{n-k+1}^{(k-1)}} \ar[r]^-{j_n^{(k-1)}} & 
S_n \ar[d]_-{\partial_{n-k+1}}  
\ar@/^1.9pc/[rr]^-{\widehat\partial_n}
\ar@/^1.3pc/[rr]|-{\,\widehat\partial_{n-1}\,}_-\vdots
\ar@/_.6pc/[rr]|-{\,\widehat\partial_{n-k+2}\,}
\ar@/_1.2pc/[rr]_(.4){1u1.\delta} &&
S_nS_{n-1}S_n \ar[d]|-{\ \ \partial_{n-k+1}\partial_{n-k+1}\partial_{n-k+1}} \\
S_{n-1}^{(k-1)} \ar[r]_-{j_{n-1}^{(k-1)}} &
S_{n-1} \ar@/^1.9pc/[rr]^(.4){\widehat\partial_{n-1}}
\ar@/^1.3pc/[rr]|-{\,\widehat\partial_{n-2}\,}_-\vdots
\ar@/_.6pc/[rr]|-{\,\widehat\partial_{n-k+1}\,}
\ar@/_1.2pc/[rr]_-{1u1.\delta} &&
S_{n-1}S_{n-2}S_{n-1}}
$$
(note the serial commutativity of the second diagram thanks to the simplicial
identities) and show that they give rise to the $\mathcal S$-relative
pullback
\begin{equation}\label{eq:S_n^k}
\xymatrix{
S_n^{(k)} \ar[r]^-\varepsilon \ar[d]_-{p_{S_n^{(k-1)}}} &
I \ar[d]^-u \\
S_n^{(k-1)} \ar[r]_-{\partial_{n-k+1}^{(k-1)}} &
S_{n-1}^{(k-1)}.}
\end{equation}
From \eqref{eq:eps_in_S} we infer 
$\xymatrix@C=15pt{S_{n-1}^{(k-1)}&& \ar[ll]_-{p_{S_n^{(k-1)}}}S_n^{(k)}
\ar[r]^-\varepsilon & I}\in \mathcal S$.
The square of \eqref{eq:S_n^k} commutes since $j_{n-1}^{(k-1)}$ in the right
vertical of the commutative diagram
$$
\xymatrix@R=5pt{
& S_n^{(k-1)} \ar[rr]^-{\partial_{n-k+1}^{(k-1)}} \ar[dd]^-{j_{n}^{(k-1)}} &&
S_{n-1}^{(k-1)} \ar[dd]^-{j_{n-1}^{(k-1)}} \\
\\
S_n^{(k)} \ar[r]^-{j_{n}^{(k)}} \ar[rd]^(.7){j_{n}^{(k)}}
\ar@/^1.2pc/[ruu]^-{p_{S_n^{(k-1)}}} \ar@/_1.2pc/[rrdd]_-\varepsilon &
S_n \ar[rr]^-{\partial_{n-k+1}} &&
S_{n-1} \\
& S_n \ar[rd]^-\varepsilon \\
&& I \ar[r]_-u \ar[ruu]^-u &
S_{n-1}^{(k-1)} \ar[uu]_-{j_{n-1}^{(k-1)}}}
$$
is a monomorphism.
In order to check the universality of \eqref{eq:S_n^k}, take a bimonoid
morphism $\xymatrix@C=12pt{C \ar[r]^-g & S_n^{(k-1)}}$ such that the exterior
of the first diagram of
\begin{equation}\label{eq:gtilde}
\xymatrix@C=15pt@R=15pt{
C \ar@{-->}[rd]^-{\widetilde g} \ar@/^1.3pc/[rrrd]^-\varepsilon 
\ar@/_1.3pc/[rddd]_(.6)g \\
& S_n^{(k)} \ar[rr]^-\varepsilon \ar[dd]_(.3){p_{S_n^{(k-1)}}} &&
I \ar[dd]^(.3)u \\
\\
& S_n^{(k-1)} \ar[rr]_-{\partial_{n-k+1}^{(k-1)}} &&
S_{n-1}^{(k-1)}}\quad
\raisebox{-5pt}{$
\xymatrix@R=15pt{
\qquad\qquad
\\
C \ar[r]^-g \ar@{-->}[dd]_-{\widetilde g} &
S_n^{(k-1)} \ar[dd]^(.4){j_n^{(k-1)}} \\
\\
S_n^{(k)}\ar[r]_-{j_n^{(k)}} &
S_n \ar@/^2.1pc/[rr]^-{\widehat\partial_n}
\ar@/^1.4pc/[rr]|-{\,\widehat\partial_{n-1}\,}_-\vdots
\ar@/_.6pc/[rr]|-{\,\widehat\partial_{n-k+1}\,}
\ar@/_1.4pc/[rr]_-{1u1.\delta=\widehat{u.\varepsilon}} &&
S_nS_{n-1}S_n }$}
\end{equation}
commutes (we know that
$\xymatrix@C=12pt{S_n^{(k-1)} & \ar[l]_-g C \ar[r]^-{\varepsilon} & I}\in
\mathcal S$ by \eqref{eq:eps_in_S}). Then a filler $\widetilde g$ of the first
diagram of \eqref{eq:gtilde} is constructed using the universality of the
equalizer in the bottom row of the second diagram of \eqref{eq:gtilde}. The
occurring morphism $j_n^{(k-1)}.g$ renders commutative the diagrams
$$
\xymatrix@R=43pt{
C \ar[r]^-g &
S_n^{(k-1)} \ar[r]^-{j_n^{(k-1)}} \ar[d]_-{j_n^{(k-1)}} &
S_n \ar[d]^-{\widehat \partial_i} \\
& S_n \ar[r]_-{1u1.\delta} &
S_nS_{n-1}S_n}\qquad
\xymatrix@C=20pt@R=10pt{
C \ar[r]^-g \ar[dd]_-g  \ar[rd]_-{u.\varepsilon} &
  S_n^{(k-1)} \ar[r]^-{j_n^{(k-1)}} \ar[d]^-{\partial_{n-k+1}^{(k-1)}} &
S_n \ar[dd]^-{\partial_{n-k+1}} \\
& S_{n-1}^{(k-1)} \ar[rd]^(.45){j_{n-1}^{(k-1)}} \\
S_n^{(k-1)} \ar[r]_-{j_n^{(k-1)}} &
S_n \ar[r]_-{u.\varepsilon} &
S_{n-1}}
$$
for $n-k+1<i\leq n$. 
Thus since it is a comonoid morphism, it equalizes the parallel morphisms of
the second diagram of \eqref{eq:gtilde}.  
The so constructed morphism $\widetilde g$ renders
commutative the first diagram of \eqref{eq:gtilde} since the right column and
the bottom row of the first commutative diagram in
\begin{equation}\label{eq:g_unique}
\xymatrix@C=35pt{
C \ar[r]^-{\widetilde g} \ar[d]_-g &
S_n^{(k)} \ar[r]^-{p_{S_n^{(k-1)}}} \ar[rd]_-{j_n^{(k)}} &
S_n^{(k-1)} \ar[d]^-{j_n^{(k-1)}} \\
S_n^{(k-1)} \ar[rr]_-{j_n^{(k-1)}} &&
S_n.}\quad
\xymatrix{
C \ar[d]_-h \ar[rr]^-{\widetilde g} \ar@/^1pc/[rd]^(.6)g &&
S_n^{(k)} \ar[d]^-{j_n^{(k)}} \\
S_n^{(k)} \ar[r]^-{p_{S_n^{(k-1)}}} \ar@/_1.1pc/[rr]_-{j_n^{(k)}}&
S_n^{(k-1)}\ar[r]^-{j_n^{(k-1)}} &
S_n}
\end{equation}
are equal monomorphisms. Finally $\tilde g$ is the unique filler of the first
diagram of \eqref{eq:gtilde}; as if also $h$ makes the first diagram of
\eqref{eq:gtilde} commute then also the second diagram of \eqref{eq:g_unique}
commutes. Since $j_n^{(k)}$ is a monomorphism, this proves $h=\widetilde g$.
In order to see the reflectivity of \eqref{eq:S_n^k} on the right, take
bimonoid morphisms 
$\xymatrix@C=12pt{D & \ar[l]_-h C \ar[r]^-g & S_n^{(k)}}$
such that
$\xymatrix@C=12pt{D & \ar[l]_-h C \ar[r]^-g & S_n^{(k)}
\ar[rr]^-{p_{S_n^{(k-1)}}} && S_n^{(k-1)}}\in \mathcal S$; equivalently, the
large square on the left of 
$$
\xymatrix@C=45pt@R=15pt{
C \ar[r]^-\delta \ar[dd]_-\delta &
C^2 \ar[r]^-c &
C^2 \ar[d]^-{hg} \\
&& DS_n^{(k)} \ar@/^1.2pc/[rd]^-{1j_n^{(k)}} \ar[d]^-{1p_{S_n^{(k-1)}}} \\
C^2 \ar[r]_-{hg} &
DS_n^{(k)} \ar[r]^-{1p_{S_n^{(k-1)}}} \ar@/_1.1pc/[rr]_-{1j_n^{(k)}} &
DS_n^{(k-1)} \ar[r]^-{1j_n^{(k-1)}} &
DS_n}
$$
commutes. Since $\xymatrix{DS_n^{(k-1)} \ar[r]^-{1j_n^{(k-1)}} & DS_n}$ is a
monomorphism, this is equivalent to the commutativity of the exterior. Since
also $\xymatrix{DS_n^{(k)} \ar[r]^-{1j_n^{(k)}} & DS_n}$ is a monomorphism,
this is further equivalent to $hg.c.\delta=hg.\delta$; that is, to 
$\xymatrix@C=12pt{D & \ar[l]_-h C \ar[r]^-g & S_n^{(k)}}\in \mathcal S$.
Reflectivity on the left follows symmetrically.
\end{example}

\begin{proposition} \label{prop:Moore}
Let $\mathcal S$ be a monoidal admissible class of spans in some 
monoidal category $\mathsf C$ and let $S$ be a simplicial monoid in $\mathsf
C$ which satisfies the successive conditions in Assumption \ref{ass:Moore} for
any positive integer.  
For any positive integer $n$, the morphisms
$D_{n-1}:=\xymatrix@C=30pt{S_{n}^{(n)} \ar[r]^-{p_{S_{n}^{(n-1)}}} &
S_{n}^{(n-1)} \ar[r]^-{\partial_0^{(n-1)}} & S_{n-1}^{(n-1)}}$ 
(where $S_n^{(0)}:=S_n$ and $\partial_0^{(0)}:=\partial_0$) render commutative
the diagram
$$
\xymatrix{
S_{n+1}^{(n+1)} \ar[r]^-{D_n} \ar[d]_-{p_I} &
S_n^{(n)} \ar[d]^-{D_{n-1}} \\
I \ar[r]_-u &
S_{n-1}^{(n-1)} .}
$$
In particular, whenever the monoidal unit $I$ is a zero object in the
category of monoids in $\mathsf C$, there is a chain complex
\begin{equation}\label{eq:Moore}
 \xymatrix{
 \cdots \ar[r]^-{D_{n+1}} &
 S_{n+1}^{(n+1)} \ar[r]^-{D_n} & 
 S_n^{(n)} \ar[r]^-{D_{n-1}} &
 \cdots \ar[r]^-{D_1} &
 S_1^{(1)} \ar[r]^-{D_0} &
 S_0.}
\end{equation}
\end{proposition}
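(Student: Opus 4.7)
My plan is to show directly that $D_{n-1}\circ D_n = u\circ p_I$, from which the commutativity of the displayed square follows; whenever $I$ is a zero object, any morphism through $I$ is zero, so the Moore differentials automatically form a chain complex.

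The key preliminary step I would establish, by induction on $l$, is that the derived face maps inherit the simplicial identities
$\partial_i^{(l)}\circ \partial_j^{(l)} = \partial_{j-1}^{(l)}\circ \partial_i^{(l)}$ for $i<j$ in the allowed range. The base case is the simplicial identity for $S$ itself. In the inductive step, both composites are morphisms into $S_{n-1}^{(l)}=S_{n-1}^{(l-1)}\coten{S_{n-2}^{(l-1)}} I$, and by the universal property of the relative pullback it suffices to check equality after post-composition with the two projections: with $p_I$ this holds automatically from the top triangle $p_I\circ \partial_k^{(l)}=p_I$ in Assumption \ref{ass:Moore}; with $p_{S_{n-1}^{(l-1)}}$ it reduces, via the commutative square $p_{S_m^{(l-1)}}\circ\partial_k^{(l)}=\partial_k^{(l-1)}\circ p_{S_{m+1}^{(l-1)}}$, to the simplicial identity at level $l-1$.

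Next, reading Assumption \ref{ass:Moore} at $l=n+1$ presents $S_{n+1}^{(n+1)}$ as the relative pullback $S_{n+1}^{(n)}\coten{S_n^{(n)}}I$ with attaching map $\partial_{(n+1)-n}^{(n)}=\partial_1^{(n)}$, so that $\partial_1^{(n)}\circ p_{S_{n+1}^{(n)}}=u\circ p_I$. Combining this with the commutative squares from Assumption \ref{ass:Moore} at $l=n$ for $k=0$ and $k=1$ and with the identity $\partial_0\circ\partial_1=\partial_0\circ\partial_0$ at level $n-1$ supplied by the preliminary step, the main computation runs
\begin{align*}
D_{n-1}\circ D_n
&= \partial_0^{(n-1)}\circ p_{S_n^{(n-1)}}\circ \partial_0^{(n)}\circ p_{S_{n+1}^{(n)}} \\
&= \partial_0^{(n-1)}\circ \partial_0^{(n-1)}\circ p_{S_{n+1}^{(n-1)}}\circ p_{S_{n+1}^{(n)}} \\
&= \partial_0^{(n-1)}\circ \partial_1^{(n-1)}\circ p_{S_{n+1}^{(n-1)}}\circ p_{S_{n+1}^{(n)}} \\
&= \partial_0^{(n-1)}\circ p_{S_n^{(n-1)}}\circ \partial_1^{(n)}\circ p_{S_{n+1}^{(n)}} \\
&= \partial_0^{(n-1)}\circ p_{S_n^{(n-1)}}\circ u\circ p_I \\
&= u\circ p_I,
\end{align*}
where the final equality uses that every $p_{S_m^{(l)}}$ and $\partial_k^{(l)}$ is a morphism of monoids and hence preserves the unit. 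Commutativity of the square is then immediate, and the chain-complex clause follows since $u\circ p_I$ vanishes when $I$ is a zero object.

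The principal obstacle is purely bookkeeping: the symbol $\partial_i^{(l)}$ refers to distinct morphisms depending on the simplicial degree of its domain, and one has to keep track of exactly which one is being used and which simplicial identity descends to which level. Once the inductive lemma on the inherited simplicial identities is granted, what remains is a short sequence of pullback substitutions together with a single application of the identity $\partial_0\partial_1=\partial_0\partial_0$.
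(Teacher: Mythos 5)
Your proof is correct and follows essentially the same route as the paper: your chain of equalities is exactly the paper's single commutative diagram, region by region --- the defining square of $\partial_0^{(n)}$ and $\partial_1^{(n)}$ as $\partial_k^{(n-1)}\morcoten 1$, the pullback identity $\partial_1^{(n)}\circ p_{S_{n+1}^{(n)}}=u\circ p_I$ defining $S_{n+1}^{(n+1)}$, the identity $\partial_0^{(n-1)}\partial_1^{(n-1)}=\partial_0^{(n-1)}\partial_0^{(n-1)}$ at the derived level, and unitality of the monoid morphisms. The only difference is presentational: you make explicit (via joint monicity of the projections $p_{S^{(l-1)}}$, $p_I$) the inductive lemma that the derived face maps inherit the simplicial identities, which the paper uses implicitly in its diagram.
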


\begin{proof}
The morphisms $D_n$ are clearly well defined and they render commutative
$$
\xymatrix@R=15pt@C=35pt{
S_{n+1}^{(n+1)} \ar[rr]_(.4){p_{S_{n+1}^{(n)}}} \ar[d]_-{p_I}
\ar@/^1.2pc/[rrrr]^-{D_n} &&
S_{n+1}^{(n)} \ar[rr]_(.66){\partial_0^{(n)}} 
\ar[ld]_(.7){\partial_1^{(n)}} \ar[rd]^(.7){p_{S_{n+1}^{(n-1)}}} && 
S_n^{(n)} \ar[d]_-{p_{S_n^{(n-1)}}} \ar@/^1.7pc/[ddd]^-{D_{n-1}} \\
I \ar[r]_-u \ar@{=}[dd] &
S_n^{(n)}\ar[rd]_-{p_{S_n^{(n-1)}}} &&
S_{n+1}^{(n-1)} \ar[r]_-{\partial_0^{(n-1)}} \ar[ld]^-{\partial_1^{(n-1)}} &
S_n^{(n-1)} \ar[dd]_-{\partial_0^{(n-1)}} \\
&& S_n^{(n-1)} \ar[d]^-{\partial_0^{(n-1)}} \\
I \ar[rr]_-u &&
S_{n-1}^{(n-1)} \ar@{=}[rr] &&
 S_{n-1}^{(n-1)}}
$$
\end{proof}

\begin{example}
The definition of Cartesian monoidal category includes the terminality of the monoidal unit; which is therefore a zero object in the category of monoids. Thus Proposition \ref{prop:Moore} implies the well-known fact that any simplicial
monoid in a Cartesian monoidal category admits a Moore complex \eqref{eq:Moore}.   
\end{example}

\begin{example} \label{ex:zero_CoMon}
If $\mathsf M$ is a symmetric monoidal category in which equalizers
exist and are preserved by taking the monoidal product with
any object, then we know from Example \ref{ex:ass_Moore_CoMon} that Assumption
\ref{ass:Moore} holds for any simplicial bimonoid in $\mathsf M$ (that is, for
any functor from $\Delta^{\mathsf{op}}$ to the category of monoids in the
category of comonoids in $\mathsf M$). 
Since in the category of bimonoids in $\mathsf M$ the monoidal unit is the zero
object, we conclude by Proposition \ref{prop:Moore} that any simplicial
bimonoid in $\mathsf M$ admits a Moore complex \eqref{eq:Moore}.   
\end{example}

\begin{definition} \label{def:Moore_length}
Let $\mathcal S$ be a monoidal admissible class of spans in some 
monoidal category $\mathsf C$. We say that a simplicial monoid $S$ in $\mathsf
C$ {\em has Moore length $l$} if the successive conditions in Assumption
\ref{ass:Moore} hold for any positive integer and 
$\xymatrix@C=12pt{I \ar[r]^-u & S_n^{(n)}}$ and
$\xymatrix@C=12pt{S_n^{(n)} \ar[r]^-{p_I} & I}$ are mutually inverse
isomorphisms for all $n>l$.
\end{definition}

\begin{lemma} \label{lem:Moore_length_iso}
Let $\mathcal S$ be a monoidal admissible class of spans in some 
monoidal category $\mathsf C$. For a simplicial monoid $S$ in $\mathsf C$ of
Moore length $l$, there are mutually inverse isomorphisms 
$\xymatrix@C=12pt{I \ar[r]^-u & S_n^{(n-i)}}$ and
$\xymatrix@C=12pt{S_n^{(n-i)} \ar[r]^-{p_I} &I}$ for any non-negative integer
$i$ and any $n>i+l$.
\end{lemma}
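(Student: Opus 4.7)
The plan is to proceed by induction on $i$. The case $i=0$ is precisely Definition \ref{def:Moore_length}. For $i \geq 1$ and $n > i+l$, Assumption \ref{ass:Moore} provides the $\mathcal{S}$-relative pullback square
$$
\xymatrix{
S_n^{(n-i+1)} \ar[r]^-{p_I} \ar[d]_-{p_{S_n^{(n-i)}}} & I \ar[d]^-u \\
S_n^{(n-i)} \ar[r]_-{\partial_i^{(n-i)}} & S_{n-1}^{(n-i)}.
}
$$
Rewriting the top-left corner as $S_n^{(n-(i-1))}$ and the bottom-right as $S_{n-1}^{((n-1)-(i-1))}$, the induction hypothesis at index $i-1$ applies to both (since $n > i+l$ forces both $n > (i-1)+l$ and $n-1 > (i-1)+l$) and provides mutually inverse isomorphisms $u, p_I$ between $I$ and each of these two corners.

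The crucial step is the standard fact that an $\mathcal{S}$-relative pullback along an isomorphism is again an isomorphism. Applied here, the right vertical $u: I \to S_{n-1}^{(n-i)}$ being invertible forces the left vertical $p_{S_n^{(n-i)}}: S_n^{(n-i+1)} \to S_n^{(n-i)}$ to be invertible. Composing with the known isomorphism $u: I \to S_n^{(n-i+1)}$ yields an isomorphism $I \to S_n^{(n-i)}$; because $p_{S_n^{(n-i)}}$ preserves units, this composite coincides with $u_{S_n^{(n-i)}}$. The identity $p_I \circ u = 1_I$ is automatic from unit preservation of $p_I$, whence $p_I$ is the two-sided inverse of $u$, closing the induction.

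The main obstacle is to justify the claim that an $\mathcal{S}$-relative pullback along an iso is an iso. I would approach it by feeding the candidate data $(1_{S_n^{(n-i)}}, u^{-1} \circ \partial_i^{(n-i)}): S_n^{(n-i)} \to S_n^{(n-i)} \times I$ into the universal property of the relative pullback, thereby producing a two-sided inverse to $p_{S_n^{(n-i)}}$. The only delicate verification is that this candidate span lies in $\mathcal{S}$, which I expect to follow from the stability conditions for an admissible class of spans in \cite[Definitions 2.1 and 2.5]{Bohm:Xmod_I}, and which the paper's Section \ref{sec:iso} is plausibly set up to handle in general.
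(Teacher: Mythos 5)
Your proposal is correct and takes essentially the same route as the paper: induction on $i$, using the defining $\mathcal S$-relative pullback square whose right vertical $u$ is an isomorphism by the induction hypothesis, inverting the projection $p_{S_n^{(n-i)}}$ via the universal property applied to the cone $(1,\,u^{-1}\circ\partial_i^{(n-i)})$ (which equals $(1,p_I)$, since $p_I\circ\partial_i^{(n-i)}=p_I$ by construction of $\partial_i^{(n-i)}$), and then composing with the isomorphism $u$ into the top corner and using unit preservation. The two points you defer are settled in the paper exactly as you anticipate: the span with legs $1$ and $p_I$ lies in $\mathcal S$ by the monoidality of $\mathcal S$ together with \cite[Lemma 3.4~(2)]{Bohm:Xmod_I}, and the filler is a two-sided (not just one-sided) inverse because the pullback projections $p_{S_n^{(n-i)}}$ and $p_I$ are jointly monic.
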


\begin{proof}
We proceed by induction in $i$.

For $i=0$ and $n>l$ the claim holds by definition.

Assume that it holds for some fixed value of $i$ and all $n>l+i$.

If $n>l+i+1$ then in the $\mathcal S$-relative pullback in the inner square of  
\begin{equation} \label{eq:p_inv} 
\xymatrix@C=15pt@R=15pt{
S_n^{(n-i-1)} \ar@{-->}[rd]^(.6){p_{S_n^{(n-i-1)}}^{-1}} \ar@/^1.4pc/[rrrd]^-{p_I}
\ar@{=}@/_1.5pc/[rddd] \\
& S_n^{(n-i)} \ar[rr]^-{p_I} \ar[dd]_(.3){p_{S_n^{(n-i-1)}}} &&
I \ar[dd]^-u \\
\\
& S_n^{(n-i-1)} \ar[rr]_-{\partial_{i+1}^{(n-i-1)}} &&
S_{n-1}^{(n-i-1)}}
\end{equation}
the right column is an isomorphism by the induction hypothesis. We claim that
so is then the left vertical of the inner square.
By the monoidality of $\mathcal S$, 
$\xymatrix@C=10pt{I & \ar@{=}[l] I \ar@{=}[r] & I}\in \mathcal S$, hence by
\cite[Lemma 3.4~(2)]{Bohm:Xmod_I} also
$\xymatrix@C=10pt{
S_n^{(n-i-1)} & \ar@{=}[l] S_n^{(n-i-1)}\ar[r]^-{p_I} & I}\in \mathcal S$. 
The exterior of \eqref{eq:p_inv} commutes by the commutativity of 
$$
\xymatrix{
S_n^{(n-i-1)} \ar[d]_-{\partial_{i+1}^{(n-i-1)}} \ar[r]^-{p_I} &
I \ar[d]^-u \\
S_{n-1}^{(n-i-1)} \ar[ru]^-{p_I} \ar@{=}[r] &
S_{n-1}^{(n-i-1)}}
$$
where the bottom triangle commutes by the induction hypothesis.
Then by the universality of the $\mathcal S$-relative pullback in
\eqref{eq:p_inv} there is a unique morphism $p_{S_n^{(n-i-1)}}^{-1}$ in 
\eqref{eq:p_inv}. It is the right inverse of $p_{S_n^{(n-i-1)}}$ by
construction and also the left inverse since 
$\xymatrix@C=15pt{S_n^{(n-i-1)} && \ar[ll]_-{p_{S_n^{(n-i-1)}}} S_n^{(n-i)}
\ar[r]^-{p_I} & I}$ are jointly monic and the following diagrams commute.
$$
\xymatrix@C=35pt{
&& S_n^{(n-i)} \ar[d]^-{p_{S_n^{(n-i-1)}}} \\
S_n^{(n-i)} \ar[r]_-{p_{S_n^{(n-i-1)}}} &
S_n^{(n-i-1)}\ar@/^1pc/[ru]^-{p_{S_n^{(n-i-1)}}^{-1}} \ar@{=}[r] &
S_n^{(n-i-1)}} \qquad
\xymatrix@C=35pt{
& S_n^{(n-i-1)}\ar[r]^-{p_{S_n^{(n-i-1)}}^{-1}} \ar[rd]_-{p_I} & 
S_n^{(n-i)} \ar[d]^-{p_I} \\
S_n^{(n-i)} \ar[rr]_-{p_I} \ar@/^1pc/[ru]^-{p_{S_n^{(n-i-1)}}} &&
I} 
$$
Using that also 
$\xymatrix@C=12pt{I \ar[r]^-u & S_n^{(n-i)}}$ is an isomorphism by the induction
hypothesis, so is the composite morphism 
$\xymatrix@C=10pt{
I \ar[r]^-u & 
S_n^{(n-i)} \ar[rrr]^-{p_{S_n^{(n-i-1)}}} &&&
S_n^{(n-i-1)}}
=
\xymatrix@C=12pt{I \ar[r]^-u &S_n^{(n-i-1)}}$
what completes the proof.
\end{proof}

\begin{lemma} \label{lem:k_up}
Let $\mathcal S$ be a monoidal admissible class of spans in some monoidal category $\mathsf C$ and let $S$ be a simplicial monoid in $\mathsf C$ for which the successive conditions of Assumption \ref{ass:Moore} hold for any positive integer. If there is some non-negative integer $k$ for which 
$\xymatrix@C=12pt{I \ar[r]^-u & S_n^{(k)}}$ and
$\xymatrix@C=12pt{S_n^{(k)} \ar[r]^-{p_I} &I}$ are mutually inverse isomorphisms  for all $n \geq k$, then also 
$\xymatrix@C=12pt{I \ar[r]^-u & S_n^{(k+1)}}$ and
$\xymatrix@C=12pt{S_n^{(k+1)} \ar[r]^-{p_I} &I}$ are mutually inverse isomorphisms  for all $n > k$.
\end{lemma}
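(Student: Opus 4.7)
The plan is to reduce everything to a single universal-property check in the $\mathcal{S}$-relative pullback
$$
\xymatrix@C=15pt@R=15pt{
S_n^{(k+1)} \ar[r]^-{p_I} \ar[d]_-{p_{S_n^{(k)}}} & I \ar[d]^-u \\
S_n^{(k)} \ar[r]_-{\partial_{n-k}^{(k)}} & S_{n-1}^{(k)}
}
$$
defining $S_n^{(k+1)}$ for $n > k$. Since $n > k$ forces both $n \geq k$ and $n-1 \geq k$, the hypothesis supplies mutually inverse isomorphisms $u \leftrightarrow p_I$ at both $S_n^{(k)}$ and $S_{n-1}^{(k)}$. The identity $p_I \circ u = 1_I$ is automatic from unit preservation, so the whole task reduces to $u \circ p_I = 1_{S_n^{(k+1)}}$, which I would establish by showing that the two parallel endomorphisms of $S_n^{(k+1)}$ induce identical cones over the pullback diagram, and then appealing to uniqueness in the universal factorisation.

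The main input is the preliminary identity $\partial_{n-k}^{(k)} = u \circ p_I$ as monoid morphisms $S_n^{(k)} \to S_{n-1}^{(k)}$. This holds because both source and target are isomorphic as monoids (via the hypothesised isos) to the initial monoid $I$ in the category of monoids in $\mathsf C$, so there is only one monoid morphism between them. Substituting this into the pullback equation $\partial_{n-k}^{(k)} \circ p_{S_n^{(k)}} = u \circ p_I$ and cancelling the split monic $u: I \to S_{n-1}^{(k)}$ produces $p_I \circ p_{S_n^{(k)}} = p_I$ (the $p_I$'s on the two sides having different sources $S_n^{(k)}$ and $S_n^{(k+1)}$). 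Pre-composing with the iso $u: I \to S_n^{(k)}$ yields
$$
u \circ p_I = u \circ p_I \circ p_{S_n^{(k)}} = 1_{S_n^{(k)}} \circ p_{S_n^{(k)}} = p_{S_n^{(k)}}
$$
as morphisms $S_n^{(k+1)} \to S_n^{(k)}$. Combined with the trivial $p_I \circ u \circ p_I = p_I$, this exhibits $u \circ p_I$ as a filler of the cone $(p_{S_n^{(k)}}, p_I)$ from $S_n^{(k+1)}$, a cone also filled tautologically by $1_{S_n^{(k+1)}}$.

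The last step is to invoke uniqueness in the universal property of the $\mathcal{S}$-relative pullback, whereupon the two fillers must coincide. This is legitimate because the span $\xymatrix@C=15pt{S_n^{(k)} & \ar[l]_-{p_{S_n^{(k)}}} S_n^{(k+1)} \ar[r]^-{p_I} & I}$ lies in $\mathcal S$ by the very construction of the $\mathcal S$-relative pullback. The main potential obstacle is precisely this $\mathcal S$-membership---it would bite if one tried to work with a different test cone---but here it is automatic, and the remaining steps are mechanical consequences of unit preservation, initiality of $I$ in the category of monoids in $\mathsf C$, and the induction hypothesis, paralleling closely the template of the proof of Lemma \ref{lem:Moore_length_iso}.
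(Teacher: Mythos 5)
Your argument is correct, but it is organized differently from the paper's. The paper proves the stronger intermediate statement that the square with apex $I$, legs $\xymatrix@C=12pt{I\ar[r]^-u & S_n^{(k)}}$ and $1_I$, over the cospan $\xymatrix@C=18pt{S_n^{(k)}\ar[r]^-{\partial_{n-k}^{(k)}} & S_{n-1}^{(k)} & I\ar[l]_-u}$ is itself an $\mathcal S$-relative pullback: it checks commutativity via unitality, the span condition $\xymatrix@C=12pt{S_n^{(k)} & \ar[l]_-u I \ar@{=}[r] & I}\in\mathcal S$ via unitality of $\mathcal S$, universality for an arbitrary admissible test cone (using that both verticals are isomorphisms), and the reflectivity condition (trivial since the top row is an identity); the desired conclusion then follows by comparing this with the given relative pullback $S_n^{(k+1)}$. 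You instead stay entirely inside the already existing relative pullback $S_n^{(k+1)}$ and show $u\circ p_I=1_{S_n^{(k+1)}}$ by exhibiting $u\circ p_I$ and $1_{S_n^{(k+1)}}$ as fillers of the same cone $(p_{S_n^{(k)}},p_I)$, whose span lies in $\mathcal S$ by the very definition of an $\mathcal S$-relative pullback, and then invoking uniqueness of fillers. This buys you something: you never have to verify universality against arbitrary test cones, nor the reflectivity condition, nor any new $\mathcal S$-membership beyond the definitional one; the paper's route, in exchange, isolates the reusable fact that the $I$-square is a relative pullback. Two small remarks: your appeal to initiality of $I$ in the category of monoids to get $\partial_{n-k}^{(k)}=u\circ p_I$ is more than is needed (and quietly uses that $\partial_{n-k}^{(k)}$ is a monoid morphism); the one-line computation $\partial_{n-k}^{(k)}=\partial_{n-k}^{(k)}\circ u\circ p_I=u\circ p_I$, using only unit-compatibility and the hypothesis $u\circ p_I=1_{S_n^{(k)}}$, is closer to the paper's toolkit. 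Also, where you write ``pre-composing with the iso $u\colon I\to S_n^{(k)}$'' you are in fact post-composing the identity $p_I\circ p_{S_n^{(k)}}=p_I$ with $u$; the computation itself is fine.
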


\begin{proof}
We need to show that under the standing assumptions the inner square of
$$
\xymatrix@C=15pt@R=15pt{
X \ar@{-->}[rd]^-g  \ar@/^1.2pc/[rrrd]^-g \ar@/_1.2pc/[rddd]_-{f} \\
& I \ar@{=}[rr] \ar[dd]_-u &&
I \ar[dd]^-u  \\
\\
& S_n^{(k)} \ar[rr]_-{\partial_{n-k}^{(k)}} &&
 S_{n-1}^{(k)}}
$$
is an $\mathcal S$-relative pullback for all $n > k$. Commutativity of this square is immediate by the unitality of the morphism in the bottom row and
$\xymatrix@C=12pt{S_n^{(k)} & \ar[l]_-u I \ar@{=}[r] & I} \in \mathcal S$ by the unitality of $\mathcal S$. For the universality observe  that since the verticals are isomorphisms by assumption, the exterior of our diagram commutes if and only if $g$ is equal to 
$\xymatrix@C=16pt{X \ar[r]^-f & S_n^{(k)}  \ar[r]^-{\partial_{n-k}^{(k)}} &  S_{n-1}^{(k)} \ar[r]^-{p_I} & I}=
\xymatrix@C=16pt{X \ar[r]^-f & S_n^{(k)}  \ar[r]^-{p_I} & I}$; if and only if $f$ is equal to
$\xymatrix@C=16pt{X \ar[r]^-g & I\ar[r]^-u & S_n^{(k)}}$. Hence the (obviously
unique) filler is $g$. Since the top row of the inner square is the identity
morphism, reflectivity becomes trivial.
\end{proof}

\begin{corollary} \label{cor:Moore_length}
Let $\mathcal S$ be a monoidal admissible class of spans in some monoidal category $\mathsf C$. For a simplicial monoid $S$ in $\mathsf C$ which satisfies the successive conditions of Assumption \ref{ass:Moore} for any positive integer, the following assertions are equivalent.
\begin{itemize} 
\item[{(i)}] $S$ has Moore length $l$.
\item[{(ii)}] $\xymatrix@C=12pt{I \ar[r]^-u & S_n^{(l+1)}}$ and
$\xymatrix@C=12pt{S_n^{(l+1)} \ar[r]^-{p_I} &I}$ are mutually inverse isomorphisms  for all $n >l$.
\end{itemize}
\end{corollary}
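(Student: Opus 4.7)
The plan is to derive each implication from one of the two lemmas that immediately precede the corollary, in essentially opposite directions.

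For $\mathrm{(i)}\Rightarrow\mathrm{(ii)}$, I would just specialise Lemma~\ref{lem:Moore_length_iso}. Given $n>l$, set $i:=n-l-1$. Then $i$ is a non-negative integer, the condition $n>i+l$ reduces to $n>n-1$ and is automatic, and $n-i=l+1$. Hence Lemma~\ref{lem:Moore_length_iso} delivers that $\xymatrix@C=12pt{I\ar[r]^-u & S_n^{(l+1)}}$ and $\xymatrix@C=12pt{S_n^{(l+1)}\ar[r]^-{p_I} & I}$ are mutually inverse isomorphisms for every $n>l$, which is precisely (ii).

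For the converse $\mathrm{(ii)}\Rightarrow\mathrm{(i)}$, I would iterate Lemma~\ref{lem:k_up}. Hypothesis (ii) is exactly the assumption of Lemma~\ref{lem:k_up} for $k=l+1$, since ``$n>l$'' coincides with ``$n\geq l+1$''. Its conclusion gives the same isomorphism property at the next level $k+1=l+2$, but only for $n>l+1$, i.e.\ $n\geq l+2$, which in turn is the hypothesis of Lemma~\ref{lem:k_up} at $k=l+2$. Proceeding by induction on $j\geq 1$, I obtain that $\xymatrix@C=12pt{I\ar[r]^-u & S_n^{(l+j)}}$ and $\xymatrix@C=12pt{S_n^{(l+j)}\ar[r]^-{p_I} & I}$ are mutually inverse isomorphisms for all $n\geq l+j$.

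To finish, for any fixed $n>l$ I set $j:=n-l$; this is a positive integer with $l+j=n$ and $n\geq l+j$, so the induction applied with this $j$ produces mutually inverse isomorphisms between $I$ and $S_n^{(n)}$. Since $n>l$ was arbitrary, $S$ has Moore length $l$ in the sense of Definition~\ref{def:Moore_length}. No genuine obstacle is expected: the whole argument is the bookkeeping of matching the conditions ``$n\geq k$'' and ``$n>k$'' appearing in Lemma~\ref{lem:k_up}, which is exactly compensated by advancing $k$ by one at each step of the induction.
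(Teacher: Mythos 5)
Your argument is correct and is exactly the paper's proof, which simply invokes Lemma \ref{lem:Moore_length_iso} for (i)$\Rightarrow$(ii) and Lemma \ref{lem:k_up} for (ii)$\Rightarrow$(i); your index bookkeeping ($i=n-l-1$ in the first direction, the induction on $k=l+j$ in the second) just makes explicit what the paper leaves to the reader.
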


\begin{proof}
Statement (i) implies (ii) by Lemma \ref{lem:Moore_length_iso} and (ii) implies (i) by Lemma \ref{lem:k_up}.
\end{proof}

\section{Invertibility of some canonical morphisms}
\label{sec:iso}

As in \cite[Theorems 1.1, 2.1 and 3.10]{Bohm:Xmod_II}, also in the forthcoming equivalence between relative categories and certain simplicial monoids a crucial role is played by the invertibility of some canonical morphisms discussed in this section. 

Consider a monoidal admissible class of spans in an arbitrary monoidal category $\mathsf C$.
Take a simplicial monoid $S$ as in \eqref{eq:simp_obj} in $\mathsf C$ for which the successive conditions in Assumption \ref{ass:Moore} hold for any positive integer. For any non-negative integer $n$, for $0\leq i \leq n$ and for $0\leq k \leq n-i$, we define a morphism 
$\xymatrix@C=15pt{S_n^{(k)} \ar[r]^-{\sigma_i^{(k)}} & S_{n+1}^{(k)}}$ iteratively as follows.
\begin{itemize}
\item $\sigma_i^{(0)}:=\sigma_i$.
\item For $0<k \leq n-i$ we define $\sigma_i^{(k)}$ as the unique filler in
$$
\xymatrix@R=10pt@C=25pt{
S_n^{(k)} \ar@/^1.2pc/[rrrd]^-{p_I} \ar[ddd]_-{p_{S_n^{(k-1)} }} \ar@{-->}[rd]^(.55){\sigma_i^{(k)}} \\
& S_{n+1}^{(k)} \ar[rr]^-{p_I} \ar[dd]_-{p_{S_{n+1}^{(k-1)} }} &&
I \ar[dd]^-u \\
\\
S_n^{(k-1)}  \ar[r]_-{\sigma_i^{(k-1)}} &
S_{n+1}^{(k-1)} \ar[rr]_-{\partial_{(n+1)-(k-1)}^{(k-1)}} &&
S_{n}^{(k-1)}}
$$
It is well-defined by the simplicial identity $\partial_{(n+1)-(k-1)}.\sigma_i=\sigma_i.\partial_{n-(k-1)}$ and the unitality of $\sigma_i^{(k-1)}$, see \cite[Proposition 3.5]{Bohm:Xmod_I}.
\end{itemize}
With these morphisms $\sigma_i^{(k)}$ at hand, we introduce for any positive integer $n$ and $0\leq k < n$ the morphisms
\begin{equation} \label{eq:y_nk}
y_{(n,k)}:=
\xymatrix@C=20pt{
S_n^{(k+1)} S_{n-1}^{(k)} \ar[rrr]^-{p_{S_n^{(k)}} \sigma_{n-1-k}^{(k)}} &&&
(S_n^{(k)})^2 \ar[r]^-m &
S_n^{(k)}.}
\end{equation}
Note that they are natural in the following sense. For a simplicial monoid
morphism $\{\xymatrix@C=16pt{S_n \ar[r]^-{f_n} & S'_n}\}_{n\geq 0}$, let us
define inductively the morphisms $f_n^{(0)}:=f_n$ and for $0<k \leq n$ the unique
morphism $f_n^{(k)}$ which renders commutative
$$
\xymatrix@C=15pt@R=15pt{
S_n^{(k)} \ar@/^1.2pc/[rrrd]^-{p_I} \ar[ddd]_-{p_{S_n^{(k-1)}}} 
\ar@{-->}[rd]^-{f_n^{(k)}}\\
& S_n^{\prime (k)} \ar[rr]^-{p_I} \ar[dd]_-{p_{S_n^{\prime (k-1)}}} &&
I \ar[dd]^-u \\
\\
S_n^{(k-1)} \ar[r]_-{f_n^{(k-1)}} &
S_n^{\prime (k-1)} \ar[rr]_-{\partial_{n-k+1}^{\prime (k-1)}} &&
S_{n-1}^{\prime (k-1)}.}
$$
(This definition makes sense --- see \cite[Proposition 3.5]{Bohm:Xmod_I} ---
since $\{f_n\}_{n\geq 0}$ is a morphism of simplicial monoids by
assumption, hence so is $\{f_n^{(k-1)}\}_{n\geq 0}$ and therefore
$$
\xymatrix@C=35pt@R=20pt{
S_n^{(k-1)} \ar[r]^-{\partial_{n-k+1}^{(k-1)}} \ar[d]_-{f_n^{(k-1)}} &
S_{n-1}^{(k-1)} \ar[d]_-{f_{n-1}^{(k-1)}} &
I \ar[l]_-u \ar@{=}[d] \\
S_n^{\prime (k-1)} \ar[r]_-{\partial_{n-k+1}^{\prime (k-1)}}  &
S_{n-1}^{\prime (k-1)}  &
I \ar[l]^-u }
$$
commutes.)
These morphisms and $y_{(n,k)}$ fit into the commutative the diagram 
\begin{equation}\label{eq:y_nat}
\xymatrix@C=35pt{
S_n^{(k+1)}S_{n-1}^{(k)} \ar[rr]_-{p_{S_n^{(k)}} \sigma_{n-k-1}^{(k)}}
\ar[d]_-{f_n^{(k+1)} f_{n-1}^{(k)}} \ar@/^1.1pc/[rrr]^-{y_{(n,k)}} &&
(S_n^{(k)})^2\ar[r]_-m \ar[d]^-{f_n^{(k)}f_n^{(k)}} &
S_n^{(k)} \ar[d]^-{f_n^{(k)}} \\
S_n^{\prime (k+1)}S_{n-1}^{\prime (k)} 
\ar[rr]^-{p_{S_n^{\prime (k)}} \sigma_{n-k-1}^{\prime (k)}}
\ar@/_1.1pc/[rrr]_-{y'_{(n,k)}} &&
(S_n^{\prime (k)})^2\ar[r]^-m &
S_n^{\prime (k)} .}
\end{equation}

\begin{lemma}
Consider a monoidal admissible class of spans in an arbitrary monoidal category $\mathsf C$.
Take a simplicial monoid $S$ as in \eqref{eq:simp_obj} in $\mathsf C$ for which the successive conditions in Assumption \ref{ass:Moore} hold for any positive integer.  
If for some positive integer $n$ and some $0\leq k < n-1$ the morphism $y_{(n,k)}$ of \eqref{eq:y_nk} is invertible then also $y_{(n-1,k)}$ is invertible. 
\end{lemma}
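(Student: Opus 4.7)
The plan is to exhibit $y_{(n-1,k)}$ as a retract of $y_{(n,k)}$ in the arrow category of $\mathsf C$; once this is done, the conclusion follows from the elementary fact that a retract of an isomorphism is itself an isomorphism, with explicit formula
\[
y_{(n-1,k)}^{-1}=(\partial_0^{(k+1)}\otimes\partial_0^{(k)})\circ y_{(n,k)}^{-1}\circ\sigma_0^{(k)}.
\]

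The section--retraction pair is built from the $(k)$-- and $(k+1)$--level liftings of the $0$-th face and degeneracy. On the codomain side I take $\sigma_0^{(k)}\colon S_{n-1}^{(k)}\to S_n^{(k)}$ as the section and $\partial_0^{(k)}\colon S_n^{(k)}\to S_{n-1}^{(k)}$ as the retraction; on the domain side I take the monoidal products $\sigma_0^{(k+1)}\otimes\sigma_0^{(k)}$ and $\partial_0^{(k+1)}\otimes\partial_0^{(k)}$. The retraction identities $\partial_0^{(k)}\circ\sigma_0^{(k)}=\mathrm{id}$ at the relevant levels all reduce, by induction on the level and by joint monicity of the pullback projections $p_{S^{(\cdot)}}$ and $p_I$, to the standard bottom-level identity $\partial_0\circ\sigma_0=\mathrm{id}$.

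The two naturality squares to verify are
\[
y_{(n,k)}\circ(\sigma_0^{(k+1)}\otimes\sigma_0^{(k)})=\sigma_0^{(k)}\circ y_{(n-1,k)}\quad\text{and}\quad\partial_0^{(k)}\circ y_{(n,k)}=y_{(n-1,k)}\circ(\partial_0^{(k+1)}\otimes\partial_0^{(k)}).
\]
For each, I unfold the definition \eqref{eq:y_nk} of $y$, use that $\sigma_0$ and $\partial_0$ are monoid morphisms so they commute through $m$, and reduce to two elementary ingredients: first, the compatibility $p_{S_n^{(k)}}\circ\sigma_0^{(k+1)}=\sigma_0^{(k)}\circ p_{S_{n-1}^{(k)}}$ (respectively for $\partial_0^{(k+1)}$), which is built into the defining universal property of the lifted degeneracy (respectively face); and second, the appropriate simplicial identity --- $\sigma_0\circ\sigma_{n-2-k}=\sigma_{n-1-k}\circ\sigma_0$ for the first square and $\partial_0\circ\sigma_{n-1-k}=\sigma_{n-2-k}\circ\partial_0$ for the second --- lifted to level $(k)$ by the same induction-plus-joint-monicity argument.

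The hypothesis $k<n-1$ enters precisely in the second square: it forces $n-1-k\geq 1$, which is what permits applying the simplicial relation $\partial_i\circ\sigma_j=\sigma_{j-1}\circ\partial_i$ (valid for $i<j$) rather than the retraction identity $\partial_0\circ\sigma_0=\mathrm{id}$. The main technical obstacle I anticipate is the bookkeeping around these lifted simplicial identities at level $(k)$; but each check is routine, since the defining projections $p_{S^{(k-1)}}$ and $p_I$ are jointly monic, so it always suffices to verify agreement of the two sides after post-composition with these projections, and this follows immediately from the iterated construction.
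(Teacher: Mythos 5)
Your proposal is correct and is essentially the paper's own argument: the paper also exhibits $y_{(n-1,k)}$ as a retract of $y_{(n,k)}$ via the two commuting squares \eqref{eq:y_id} (your two naturality squares, with the paper allowing any index $0\leq i<n-k-1$ where you fix $i=0$) and the simplicial identity $\partial_i.\sigma_i=1$, arriving at the same inverse formula $(\partial_i^{(k+1)}\, \partial_i^{(k)})\circ y_{(n,k)}^{-1}\circ\sigma_i^{(k)}$. Your sketch of how to verify those squares (unfolding \eqref{eq:y_nk}, multiplicativity of the lifted face and degeneracy maps, compatibility with the pullback projections, and the lifted simplicial identities checked via joint monicity) just fills in what the paper records without proof in \eqref{eq:y_id}.
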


\begin{proof}
It follows by the commutativity of both diagrams
\begin{equation}\label{eq:y_id}
\xymatrix@C=10pt{
S_n^{(k+1)} S_{n-1}^{(k)}  \ar[rrr]_-{p_{S_n^{(k)}} \sigma_{n-1-k}^{(k)}}  \ar@/^1.1pc/[rrrr]^-{y_{(n,k)}} \ar[d]_-{\partial_i^{(k+1)} \partial_i^{(k)}} &&&
(S_n^{(k)})^2 \ar[r]_-m \ar[d]^-{\partial_i^{(k)} \partial_i^{(k)}} &
S_n^{(k)} \ar[d]^-{\partial_i^{(k)}} \\
S_{n-1}^{(k+1)} S_{n-2}^{(k)} \ar[rrr]^-{p_{S_{n-1}^{(k)}} \sigma_{n-2-k}^{(k)}} \ar@/_1.1pc/[rrrr]_-{y_{(n-1,k)}} &&&
(S_{n-1}^{(k)})^2 \ar[r]^-m &
S_{n-1}^{(k)}}
\xymatrix@C=10pt{
S_{n-1}^{(k+1)} S_{n-2}^{(k)} \ar[rrr]_-{p_{S_{n-1}^{(k)}} \sigma_{n-2-k}^{(k)}} \ar@/^1.1pc/[rrrr]^-{y_{(n-1,k)}} \ar[d]_-{\sigma_i^{(k+1)} \sigma_i^{(k)}} &&&
(S_{n-1}^{(k)})^2 \ar[r]_-m \ar[d]^-{\sigma_i^{(k)} \sigma_i^{(k)}} &
S_{n-1}^{(k)} \ar[d]^-{\sigma_i^{(k)}} \\
S_n^{(k+1)} S_{n-1}^{(k)}  \ar[rrr]^-{p_{S_n^{(k)}} \sigma_{n-1-k}^{(k)}}  \ar@/_1.1pc/[rrrr]_-{y_{(n,k)}} &&&
(S_n^{(k)})^2 \ar[r]^-m &
S_n^{(k)} }
\end{equation}
for any $0\leq i < n-k-1$  together with the simplicial identity $\partial_i.\sigma_i=1$ that $y_{(n-1,k)}$ has the inverse
$$
\xymatrix@C=35pt{
S_{n-1}^{(k)} \ar[r]^-{\sigma_i^{(k)}} &
S_n^{(k)} \ar[r]^-{y_{(n,k)}^{-1}} &
S_n^{(k+1)} S_{n-1}^{(k)}  \ar[r]^-{\partial_i^{(k+1)} \partial_i^{(k)}} &
S_{n-1}^{(k+1)} S_{n-2}^{(k)}.}
$$
\end{proof}

Note that in addition to the diagrams of \eqref{eq:y_id}, $y_{(n,k)}$ renders commutative also
\begin{equation} \label{eq:y_more}
\xymatrix@C=35pt{
S_n^{(k+1)} S_{n-1}^{(k)} \ar[r]_-{p_{S_n^{(k)}}1} \ar@/^1.1pc/[rrr]^-{y_{(n,k)}} 
\ar[d]_-{\partial_{n-k-1}^{(k+1)} 1} &
S_n^{(k)} S_{n-1}^{(k)} \ar[r]_(.65){1 \sigma_{n-1-k}^{(k)}}   
\ar[rd]_(.25){\partial_{n-k-1}^{(k)} 1} &
(S_n^{(k)})^2 \ar[r]_-m \ar[d]^-{\partial_{n-k-1}^{(k)} \partial_{n-k-1}^{(k)}} &
S_n^{(k)} \ar[d]^-{\partial_{n-k-1}^{(k)}} \\
S_{n-1}^{(k+1)} S_{n-1}^{(k)} \ar[rr]_-{p_{S_{n-1}^{(k)}}1} &&
(S_{n-1}^{(k)})^2 \ar[r]_-m &
S_{n-1}^{(k)} &
\textrm{and} \\
S_n^{(k+1)} S_{n-1}^{(k)}  \ar[r]_-{p_{S_n^{(k)}}1}  \ar@/^1.1pc/[rrr]^-{y_{(n,k)}} \ar[d]_-{p_I 1} &
S_n^{(k)} S_{n-1}^{(k)} \ar[r]_-{1 \sigma_{n-1-k}^{(k)}}   \ar[d]^-{\partial_{n-k}^{(k)} 1} &
(S_n^{(k)})^2 \ar[r]_-m \ar[d]^-{\partial_{n-k}^{(k)} \partial_{n-k}^{(k)}} &
S_n^{(k)} \ar[d]^-{\partial_{n-k}^{(k)}} \\
S_{n-1}^{(k)} \ar[r]^-{u1} \ar@{=}@/_1.1pc/[rrr] &
(S_{n-1}^{(k)})^2 \ar@{=}[r] &
(S_{n-1}^{(k)})^2 \ar[r]^-m &
S_{n-1}^{(k)}.}
\end{equation}
\smallskip

\begin{proposition} \label{prop:l1_y}
Let $\mathcal S$ be a monoidal admissible class of spans in some 
monoidal category $\mathsf C$. For a simplicial monoid $S$ in $\mathsf C$ of
Moore length $l$, the following assertions hold.
\begin{itemize}
\item[{(1)}] The morphism $y_{(n,k)}$ of \eqref{eq:y_nk} is invertible for all $l<k<n$.
\item[{(2)}] For any $n>l$ the following are equivalent.
\begin{itemize}
\item[{(i)}]  $y_{(n,l)}$ of \eqref{eq:y_nk} is invertible.
\item[{(ii)}] 
$\xymatrix@C=35pt{S_{n-1}^{(l)} \ar[r]^-{\sigma_{n-1-l}^{(l)}} & S_n^{(l)}}$ is invertible.
\item[{(iii)}] The morphisms 
$\xymatrix@C=70pt{
S_{n-1}^{(l)} \ar[r]^-{\sigma_0^{(l)},\sigma_1^{(l)},\dots,\sigma_{n-1-l}^{(l)}} &
S_n^{(l)}}$ are equal isomorphisms with the inverse
$\partial_0^{(l)}=\partial_1^{(l)}=\dots=\partial_{n-l}^{(l)}$.
\end{itemize}
\end{itemize}
\end{proposition}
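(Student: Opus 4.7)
My strategy for part (1) is to leverage Lemma \ref{lem:Moore_length_iso}: for $l<k$ and $m\ge k$, it asserts that $u\colon I\to S_m^{(k)}$ and $p_I\colon S_m^{(k)}\to I$ are mutually inverse isomorphisms; in particular this applies to all three objects $S_n^{(k+1)},S_n^{(k)},S_{n-1}^{(k)}$ appearing in $y_{(n,k)}$ whenever $l<k<n$. Since the relative-pullback projection $p_{S_n^{(k)}}\colon S_n^{(k+1)}\to S_n^{(k)}$ is a morphism of monoids, $p_{S_n^{(k)}}\cdot u=u$; composing with the inverse $p_I$ of $u\colon I\to S_n^{(k+1)}$ rewrites $p_{S_n^{(k)}}$ as $u\cdot p_I$. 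Substituting into \eqref{eq:y_nk} and using the unit law $m\circ(u\otimes 1)=1$, I identify $y_{(n,k)}$, up to the left unitor, with $\sigma_{n-1-k}^{(k)}\circ(p_I\otimes 1)$. Here $p_I$ is an isomorphism by Lemma \ref{lem:Moore_length_iso}, and $\sigma_{n-1-k}^{(k)}$ is a unital morphism between two monoids whose unit maps are isomorphisms, hence itself an isomorphism. Therefore $y_{(n,k)}$ is invertible.

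For part (2), the implication (iii)$\Rightarrow$(ii) is trivial. For (i)$\Leftrightarrow$(ii) I repeat the computation of the previous paragraph with $k=l$: since $l+1>l$ and $l+1\le n$, Lemma \ref{lem:Moore_length_iso} still provides $S_n^{(l+1)}\cong I$, so the same reduction gives $y_{(n,l)}=\sigma_{n-1-l}^{(l)}\circ(p_I\otimes 1)$ (up to unitor) with $p_I\otimes 1$ an isomorphism, whence the equivalence. Note that now $\sigma_{n-1-l}^{(l)}$ need not a priori be an iso, because its target $S_n^{(l)}$ is no longer trivialized.

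The real content is (ii)$\Rightarrow$(iii), which I plan to prove by induction on $n>l$. The base case $n=l+1$ involves only $\sigma_0^{(l)}$ and $\partial_0^{(l)},\partial_1^{(l)}$; the simplicial identities $\partial_0^{(l)}\sigma_0^{(l)}=\partial_1^{(l)}\sigma_0^{(l)}=1$ force both faces to equal the two-sided inverse of the iso $\sigma_0^{(l)}$. For the inductive step with $n>l+1$, I convert (ii) to invertibility of $y_{(n,l)}$ via (i)$\Leftrightarrow$(ii), apply the preceding Lemma (whose hypothesis $l<n-1$ is satisfied) to get invertibility of $y_{(n-1,l)}$, reconvert to (ii) for $n-1$, and invoke the induction hypothesis to obtain (iii) at level $n-1$. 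I then push this up to level $n$ using the level-$l$ simplicial identities (which are inherited from level $0$ by the uniqueness clause of \cite[Proposition 3.5]{Bohm:Xmod_I}): from $\sigma_i^{(l)}\sigma_{n-2-l}^{(l)}=\sigma_{n-1-l}^{(l)}\sigma_i^{(l)}$ for $0\le i\le n-2-l$, combined with the inductive equality $\sigma_i^{(l)}=\sigma_{n-2-l}^{(l)}$ on $S_{n-2}^{(l)}\to S_{n-1}^{(l)}$ and the inductive invertibility of $\sigma_{n-2-l}^{(l)}$, right-cancellation gives $\sigma_i^{(l)}=\sigma_{n-1-l}^{(l)}$ on $S_{n-1}^{(l)}\to S_n^{(l)}$; dually, from $\partial_j^{(l)}\partial_{n-l}^{(l)}=\partial_{n-l-1}^{(l)}\partial_j^{(l)}$ for $0\le j<n-l$, the inductive coincidence and invertibility of the $\partial^{(l)}$'s at level $n-1$ allow left-cancellation to give $\partial_j^{(l)}=\partial_{n-l}^{(l)}$ on $S_n^{(l)}\to S_{n-1}^{(l)}$. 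Finally $\partial_{n-l}^{(l)}\sigma_{n-1-l}^{(l)}=1$ together with the assumed invertibility of $\sigma_{n-1-l}^{(l)}$ yields $\partial_{n-l}^{(l)}=(\sigma_{n-1-l}^{(l)})^{-1}$, so all the listed morphisms are mutually inverse isomorphisms, establishing (iii).

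The main obstacle I anticipate is bookkeeping the index ranges at the higher level $(l)$: each face or degeneracy I invoke must lie within its range of definition, and each cancellation must be performed against a morphism already certified invertible by the induction hypothesis. Once these ranges are written out explicitly, the argument is a systematic chase of level-$l$ simplicial identities bootstrapped from the one isomorphism assumed in (ii).
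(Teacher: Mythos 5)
Your proposal is correct, and for part (1) and the equivalence (i)$\Leftrightarrow$(ii) it uses essentially the same ingredients as the paper: Lemma \ref{lem:Moore_length_iso} trivializes $S_n^{(k+1)}$ (resp.\ $S_n^{(l+1)}$), so that $y_{(n,k)}$ reduces, up to a unitor, to $\sigma^{(k)}_{n-1-k}$ precomposed with an isomorphism --- the paper packages the same observation as a unitality square with invertible verticals for (1), and as the factorization $y_{(n,l)}\circ(u\otimes 1)=\sigma^{(l)}_{n-1-l}$ for (2). Where you genuinely diverge is (ii)$\Rightarrow$(iii). The paper proves it at the single fixed degree $n$ by a zig-zag that only uses the identities $\partial_i^{(l)}\sigma_i^{(l)}=1=\partial_{i+1}^{(l)}\sigma_i^{(l)}$: invertibility of $\sigma^{(l)}_{n-1-l}$ forces $\partial^{(l)}_{n-1-l}=\partial^{(l)}_{n-l}$ to be its inverse, invertibility of $\partial^{(l)}_{n-1-l}$ then forces $\sigma^{(l)}_{n-2-l}=\sigma^{(l)}_{n-1-l}$, and one iterates down to index $0$; no induction on $n$ and no appeal to the lemma on descent of $y$-invertibility is needed. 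Your argument instead runs an induction on $n$, converting (ii) to invertibility of $y_{(n,l)}$, descending to $y_{(n-1,l)}$ via the unnumbered lemma, reconverting, and then lifting (iii) from degree $n-1$ to degree $n$ by cancellation in the degeneracy--degeneracy and face--face identities at level $(l)$. This is valid (the index ranges you worry about do check out, and the level-$(l)$ identities are inherited by the same uniqueness argument the paper tacitly uses), but it is heavier: it needs the $\sigma\sigma$ and $\partial\partial$ identities at level $(l)$ and the descent lemma, whereas the paper's zig-zag needs only the $\partial\sigma$ identities between $S_{n-1}^{(l)}$ and $S_n^{(l)}$. The one thing your route buys is an explicit demonstration that invertibility in (ii) propagates downward through the degrees, but for the statement at hand the paper's one-degree argument is the more economical proof.
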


\begin{proof}
(1) In the unitality diagram
$$
\xymatrix@C=35pt{
I \ar@{=}[r] \ar[d]_-{uu} &
I \ar[d]^-u \\
S_n^{(k+1)} S_{n-1}^{(k)} \ar[r]_-{y_{(n,k)}} &
S_n^{(k)}}
$$
the verticals are isomorphisms by Lemma \ref{lem:Moore_length_iso}. Then the bottom row is an isomorphism too.

(2) The equivalence of (i) and (ii) follows by the commutativity of the diagram 
$$
\xymatrix@C=15pt{
S_{n-1}^{(l)} \ar[rrr]^-{\sigma_{n-1-l}^{(l)}} \ar[d]_-{u1} &&&
S_n^{(l)} \ar[d]^-{u1} \ar@/^1.2pc/@{=}[rd] \\
S_n^{(l+1)} S_{n-1}^{(l)} \ar@/_1.1pc/[rrrr]_-{y_{(n,l)}} \ar[rrr]^-{p_{S_n^{(l)}} \sigma_{n-1-l}^{(l)}} &&&
(S_n^{(l)})^2 \ar[r]^-m &
S_n^{(l)}}
$$
whose left vertical is an  isomorphism by Lemma \ref{lem:Moore_length_iso}.

Assertion (iii) trivially implies (ii). Conversely, if $\sigma_{n-1-l}^{(l)}$ is invertible then by the simplicial relations its inverse is $\partial_{n-1-l}^{(l)}=\partial_{n-l}^{(l)}$ which is then invertible too. Again by the simplicial relations the inverse of $\partial_{n-1-l}^{(l)}=\partial_{n-l}^{(l)}$ is
$\sigma_{n-1-l}^{(l)}=\sigma_{n-2-l}^{(l)}$. Iterating this reasoning we conclude that (iii) holds.
\end{proof}

\begin{example} \label{ex:y_groupoid}
For a simplicial category $S$ (that is, a functor $S$ from
$\Delta^{\mathsf{op}}$ to the category of monoids in the category of spans
over a fixed set in \cite[Example 1.2]{Bohm:Xmod_II}), $S_n^{(k+1)}$ is
the subcategory of those morphisms in $S_n^{(k)}$ which are taken by the
functor $\partial_{n-k}^{(k)}$ to an identity morphism. Hence for a simplicial
{\em groupoid} $S$, the category $S_n^{(k)}$ is a groupoid for all $0\leq k
\leq n$. In this case all morphisms $\{y_{(n,k)}\}_{0\leq k<n}$ of
\eqref{eq:y_nk} are invertible by the same argument applied to the morphism
(1.3) in \cite[Example 1.2]{Bohm:Xmod_II}. 
\end{example}

\begin{example} \label{ex:y_Hopf}
Let $\mathsf M$ be a symmetric monoidal category in which equalizers 
exist and are preserved by taking the monoidal product with any object. 
Let $\mathsf C$ be the monoidal category of comonoids in $\mathsf M$ and let
$\mathcal S$ be the monoidal admissible class of spans in $\mathsf C$ from
\cite[Example 2.3]{Bohm:Xmod_I}. 
Take a simplicial monoid $S$ in $\mathsf C$ (that is, a simplicial bimonoid in
$\mathsf M$) such that for all non-negative integers $n$, $S_n$ is a
cocommutative Hopf monoid in $\mathsf M$. Then $y_{(n,k)}$ of \eqref{eq:y_nk}
is invertible for all $0\leq k <n$.  This can be seen as follows.  

Recall from Example \ref{ex:ass_Moore_CoMon} that $S_n^{(k)}$ is now a joint
kernel in the category of bimonoids in $\mathsf M$, hence it is a sub bimonoid
of the cocommutative bimonoid $S_n$. Thus $S_n^{(k)}$ is a cocommutative
bimonoid. Moreover, by the cocommutativity of $S_n$, its antipode is a
comonoid morphism 
$\xymatrix@C=12pt{S_n\ar[r]^-{z_n} & S_n}$ for all $n\geq 0$.  
So we can use the universality of the equalizer in $\mathsf C$ in the bottom
row of
\begin{equation} \label{eq:z_nk}
\xymatrix@C=40pt@R=40pt{
S_n^{(k)} \ar@{-->}[d]_-{z_n^{(k)}} \ar[r]^-{j_n^{(k)}} & 
S_n \ar[d]_-{z_n}  
\ar@/^1.9pc/[rr]^-{\partial_n}
\ar@/^1.3pc/[rr]|-{\,\partial_{n-1}\,}_-\vdots
\ar@/_.6pc/[rr]|-{\,\partial_{n-k+1}\,}
\ar@/_1.2pc/[rr]_(.4){u.\varepsilon} &&
S_{n-1} \ar[d]^-{z_{n-1}} \\
S_n^{(k)} \ar[r]_-{j_n^{(k)}} &
S_n \ar@/^1.9pc/[rr]^(.4){\partial_n}
\ar@/^1.3pc/[rr]|-{\,\partial_{n-1}\,}_-\vdots
\ar@/_.6pc/[rr]|-{\,\partial_{n-k+1}\,}
\ar@/_1.2pc/[rr]_-{u.\varepsilon} &&
S_{n-1}}
\end{equation}
to define the antipode $z_n^{(k)}$ for all $n>0$ and $0\leq k\leq n$ as the
restriction of $z_n$. The diagram of \eqref{eq:z_nk} is serially commutative
since any bimonoid morphism between Hopf monoids is compatible with the
antipodes, thus so are in particular the parallel morphisms of the rows.
This makes $S_n^{(k)}$ a cocommutative Hopf monoid for all $0\leq k \leq n$.
By construction the morphisms 
$\xymatrix{S_n^{(k)} \ar[r]^-{\partial_{n-k}^{(k)}} & S_{n-1}^{(k)}}$ and 
$\xymatrix{S_{n-1}^{(k)} \ar[r]^-{\sigma_{n-k-1}^{(k)}} & S_{n^{(k)}}}$
are morphisms of bimonoids and therefore of Hopf monoids.

Summarizing,
$\xymatrix@C=35pt{
S_{n-1}^{(k)}  \ar@{ >->}@<2pt>^-{\, \sigma_{n-k-1}^{(k)} \,}[r] &
S_{n}^{(k)} \ar@{->>}@<2pt>[l]^-{\partial_{n-k}^{(k)}} }$
is a split epimorphism of bimonoids in $\mathsf M$ for which conditions (1.a)
and (1.b) of \cite[Proposition 1.4]{Bohm:Xmod_II} hold; hence by 
\cite[Proposition 1.4]{Bohm:Xmod_II} (see also \cite{Radford}) the corresponding morphism $y_{(n,k)}$ in \cite[Theorem 1.1~(1.b)]{Bohm:Xmod_II} is invertible.

The above assumption about the cocommutativity of each $S_n$ may look quite strong. Note however, that for the application of \cite[Proposition 1.4]{Bohm:Xmod_II} wee need the assumption that 
$\xymatrix@C=20pt{S_n^{(k)}  & \ar@{=}[l] S_n^{(k)}  \ar[r]^-{\partial_{n-k}^{(k)}} & S_{n-1}^{(k)}}$ belongs to the class $\mathcal S$. By \cite[Lemma 2.4]{Bohm:Xmod_I} this is equivalent to the cocommutativity of $S_{n-1}^{(k)}$. This should hold for all $n>0$ and $0\leq k <n$; so in particular for $k=0$.
\end{example}

\section{Equivalence of relative categories and simplicial monoids of Moore length 1}
\label{sec:equivalence}

\begin{theorem} \label{thm:SCat_vs_Simp}
Consider a monoidal admissible class $\mathcal S$ of spans in a monoidal
category $\mathsf C$ such that 
there exist the $\mathcal S$-relative pullbacks of those cospans in $\mathsf
C$ whose legs are in $\mathcal S$ (cf. \cite[Assumption 4.1]{Bohm:Xmod_I}). 
The equivalent categories $\mathsf{CatMon}_{\mathcal S}(\mathsf C)$ and
$\mathsf{Xmod}_{\mathcal S}(\mathsf C)$ of \cite[Theorem 3.10]{Bohm:Xmod_II}
are equivalent also to the category  
\begin{itemize}
\item[{$\mathsf{Simp}$}]\hspace{-.3cm} ${}^1\mathsf{Mon}_{\mathcal S}(\mathsf C)$ whose \\
\underline{objects} are simplicial monoids in $\mathsf C$  (that, is functors $S$ from $\Delta^{\mathsf{op}}$ to the category of monoids in $\mathsf C$) such that the following conditions hold. 
\begin{itemize}
\item[{(a)}] $S$ has Moore length 1.
\item[{(b)}] Using the notation from \eqref{eq:simp_obj}, 
$\xymatrix@C=12pt{S_0 & \ar[l]_-{\partial_1} S_1 \ar@{=}[r] & S_1}$ and 
$\xymatrix@C=12pt{S_1 & \ar@{=}[l] S_1 \ar[r]^-{\partial_0} & S_0}$ belong to $\mathcal S$.
\item[{(c)}] The morphisms $y_{(n,k)}$ of \eqref{eq:y_nk} and the morphisms 
$$
q_n:=\xymatrix@C=20pt{
S_1^{(1)}S_1^{\expcoten {S_0} n-1} \ar[r]^-{p_{S_1}1} &
S_1 S_1^{\expcoten {S_0} n-1} \ar[rrrr]^-{(1\diagcoten \sigma_0 \diagcoten \cdots \diagcoten \sigma_0)(\sigma_0 \diagcoten 1)} &&&&
(S_1^{\expcoten {S_0} n})^2 \ar[r]^-m &
S_1^{\expcoten {S_0} n}}
$$
are invertible for all $n> 0$ and $0\leq k < n$.
\end{itemize}
\underline{morphisms} are simplicial monoid morphisms (that is, natural transformations between the  functors from $\Delta^{\mathsf{op}}$ to the category of monoids in $\mathsf C$).
\end{itemize}
\end{theorem}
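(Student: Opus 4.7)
The plan is to exhibit mutually quasi-inverse functors $N \colon \mathsf{CatMon}_{\mathcal S}(\mathsf C) \to \mathsf{Simp}^1\mathsf{Mon}_{\mathcal S}(\mathsf C)$ (simplicial nerve) and $T \colon \mathsf{Simp}^1\mathsf{Mon}_{\mathcal S}(\mathsf C) \to \mathsf{CatMon}_{\mathcal S}(\mathsf C)$ (truncation to degrees $0$ and $1$), so that combined with the equivalence $\mathsf{CatMon}_{\mathcal S}(\mathsf C) \simeq \mathsf{Xmod}_{\mathcal S}(\mathsf C)$ from Part~II the full chain of equivalences is in place. The heart of the argument is a Segal-type decomposition $S_n \cong S_1 \coten{S_0} \cdots \coten{S_0} S_1$ (with $n$ factors) extracted from conditions (a)--(c), which allows one to reconstruct a simplicial monoid in $\mathsf{Simp}^1\mathsf{Mon}_{\mathcal S}(\mathsf C)$ from its truncation.

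For $N$: for a relative internal category $(C_0, C_1, s, t, e, \mu)$, set $N(C)_0 := C_0$ and $N(C)_n := C_1 \coten{C_0} \cdots \coten{C_0} C_1$ with $n$ factors. The outer face maps $\partial_0$, $\partial_n$ are projections, the inner face maps $\partial_i$ ($0 < i < n$) are induced by $\mu$ on adjacent factors, and the degeneracies $\sigma_i$ insert $e$ at position $i$; the simplicial identities reduce to associativity and unitality of $\mu$. Condition (b) holds by the very definition of a relative category. Condition (a) follows from a direct induction showing that $N(C)_n^{(k)} \cong N(C)_{n-1}^{(k-1)}$ via $\partial_n^{(k-1)}$ for $k \geq 1$, whence $N(C)_n^{(2)} \cong I$ for $n \geq 2$. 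For (c), on the nerve the morphisms $y_{(n,k)}$ and $q_n$ are precisely the canonical identifications expressing how an $n$-fold pullback decomposes with respect to the last factor. For $T$: take $(S_0, S_1, \partial_1, \partial_0, \sigma_0)$ as the underlying reflexive graph, well defined thanks to (b); the composition is obtained by inverting the Segal isomorphism $S_1 \coten{S_0} S_1 \xrightarrow{\cong} S_2$ and postcomposing with $\partial_1 \colon S_2 \to S_1$. Unitality and associativity of this composition fall out of the simplicial identities, the latter invoking the three-factor Segal isomorphism $S_3 \cong S_1 \coten{S_0} S_1 \coten{S_0} S_1$.

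The \emph{main obstacle} is the Segal decomposition itself. Applying Lemma~\ref{lem:Moore_length_iso} with $l=1$ yields $S_n^{(k)} \cong I$ for all $k \geq 2$ and $n \geq k$. Invertibility of $y_{(n,k)}$ for $k \geq 1$ (condition (c)) then gives $S_n^{(1)} \cong S_n^{(2)} S_{n-1}^{(1)} \cong S_{n-1}^{(1)}$, so iteratively $S_n^{(1)} \cong S_1^{(1)}$ for all $n \geq 1$. Invertibility of $y_{(n,0)}$ further produces $S_n \cong S_n^{(1)} S_{n-1} \cong S_1^{(1)} S_{n-1}$, and invertibility of $q_n$ converts the monoidal product $S_1^{(1)} S_1^{\expcoten{S_0} n-1}$ into the $\mathcal S$-relative pullback $S_1 \coten{S_0} \cdots \coten{S_0} S_1$ with $n$ factors. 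Induction on $n$ then delivers the desired $S_n \cong S_1 \coten{S_0} \cdots \coten{S_0} S_1$. The delicate part, which I expect to be the real technical burden, is verifying that these isomorphisms intertwine \emph{every} face and degeneracy operator of $S$ with the corresponding face and degeneracy of the nerve --- not merely the boundary operators $\partial_0$, $\partial_n$ and $\sigma_0$, $\sigma_{n-1}$ that feature directly in the construction. The naturality square~\eqref{eq:y_nat}, the commutative rectangles~\eqref{eq:y_id} and~\eqref{eq:y_more}, and Proposition~\ref{prop:l1_y}(2) (in particular the identification $\partial_0^{(l)} = \cdots = \partial_{n-l}^{(l)}$) supply the bookkeeping needed for this verification.

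Finally, the natural isomorphism $T \circ N \cong \mathrm{id}$ is immediate from the constructions, since truncating a nerve at degrees $0, 1$ returns the original reflexive graph and the composition reappears as $\partial_1 \colon N(C)_2 \to N(C)_1$. The natural isomorphism $\mathrm{id} \cong N \circ T$ is exactly the Segal isomorphism of the previous paragraph; naturality in morphisms of $\mathsf{Simp}^1\mathsf{Mon}_{\mathcal S}(\mathsf C)$ follows from diagram~\eqref{eq:y_nat}, and the fact that each component is a monoid morphism is automatic because every constituent arrow (projections $p$, degeneracies $\sigma_i$, and the multiplications $m$) is already a monoid morphism in $\mathsf C$.
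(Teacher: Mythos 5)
Your overall strategy is the same as the paper's: truncate at degrees $0,1$ in one direction, take the relative nerve in the other, and compare an object $S$ with the nerve of its truncation through exactly the isomorphisms you describe --- your ``Segal decomposition'' $S_n\cong S_1^{(1)}S_{n-1}\cong\cdots\cong S_1^{\expcoten{S_0}n}$ is precisely the paper's $w_n$ of \eqref{eq:wn}, built from $y_{(n,0)}^{-1}$, the isomorphisms of Proposition \ref{prop:l1_y} and $q_n$, and you correctly identify the need to intertwine these with \emph{all} faces and degeneracies. The genuine gap is your closing claim that each component of the isomorphism $\mathrm{id}\cong N\circ T$ is \emph{automatically} a monoid morphism ``because every constituent arrow is''. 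This is false: the multiplication $m$ of a monoid is not a monoid morphism, and, more importantly, $w_n$ is built from the \emph{inverses} of $y_{(n,0)}$ and $q_n$ and passes through objects such as $S_1^{(1)}S_{n-1}$ and $S_1^{(1)}S_1^{\expcoten{S_0}n-1}$, which in a general (non-braided) monoidal category $\mathsf C$ carry no canonical monoid structure --- only the structures transported along those isomorphisms, i.e.\ induced by distributive laws. Multiplicativity of $w_n$ therefore amounts to proving that the two distributive laws arising from the two factorizations of $w_n$ in \eqref{eq:w_factors} coincide; in the paper this is the content of Figure \ref{fig:equal_dlaws} together with \cite[Lemma 1.5, Corollary 1.7]{Bohm:Xmod_I}, and it is one of the heaviest parts of the proof. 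The same issue reappears in the truncation direction: to see that your $d=\partial_1\circ(\mathrm{Segal})^{-1}$ (which agrees with the paper's $d=m.(p_{S_1}1).q_2^{-1}$) really yields an object of $\mathsf{CatMon}_{\mathcal S}(\mathsf C)$, the paper must verify a nontrivial compatibility diagram before invoking \cite[Proposition 3.8]{Bohm:Xmod_II}, again by comparing two induced distributive laws; ``falls out of the simplicial identities'' does not cover this.

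Two smaller points. First, your inductive claim $N(C)_n^{(k)}\cong N(C)_{n-1}^{(k-1)}$ via $\partial_n^{(k-1)}$ is wrong as stated: for $k=1$ it would identify the part of $N(C)_n$ killed by the last face with all of $N(C)_{n-1}$. What is true, and what the paper proves, is that $N(C)_n^{(1)}\cong A\coten B I$ for every $n\geq 1$ (with $\partial_{n-1}^{(1)}$ the identity), whence $N(C)_n^{(2)}\cong I$ for $n\geq 2$ and Moore length $1$ follows from Corollary \ref{cor:Moore_length}. Second, even the existence of the relative pullbacks $N(C)_n^{(1)}$ and $N(C)_n^{(2)}$ is not free: the standing hypothesis only guarantees $\mathcal S$-relative pullbacks of cospans with both legs in $\mathcal S$, so the paper exhibits these pullbacks explicitly and checks universality and reflectivity by hand. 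With the multiplicativity argument supplied and these points repaired, your outline matches the paper's proof.
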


\begin{proof}
The proof consists of the construction of mutually inverse equivalence
functors between $\mathsf{Simp}^1\mathsf{Mon}_{\mathcal S}(\mathsf C)$ and
$\mathsf{CatMon}_{\mathcal S}(\mathsf C)$. 

The functor $\mathsf{Simp}^1\mathsf{Mon}_{\mathcal S}(\mathsf C) \to \mathsf{CatMon}_{\mathcal S}(\mathsf C)$ sends an object $S$ in \eqref{eq:simp_obj} to the $\mathcal S$-relative category whose underlying reflexive graph is 
$
\xymatrix@C=30pt{
S_0 \ar[r]|-{\,\sigma_0\,} &
\ar@<-5pt>[l]_-{\partial_0}\ar@<5pt>[l]^-{\partial_1} S_1.}
$
By construction this is an object of the category $\mathsf{ReflGraphMon}_{\mathcal S}(\mathsf C)$ in \cite[Theorem 2.1]{Bohm:Xmod_II}  for which the morphisms $q_n$ of \cite[(3.3)]{Bohm:Xmod_II} are invertible. 
By \cite[Proposition 3.8]{Bohm:Xmod_II} it extends uniquely to an object of $\mathsf{CatMon}_{\mathcal S}(\mathsf C)$ since the following diagram commutes. 
$$
\xymatrix@C=20pt@R=15pt{
S_1 S_1^{(1)} \ar[d]_-{1p_{S_1}} \\
S_1^2 \ar[r]^-{\raisebox{8pt}{${}_{(\sigma_0 \diagcoten 1)(1\diagcoten \sigma_0)}$}} \ar[d]^-{1y_{(1,0)}^{-1}=1q_1^{-1}} 
\ar@{=}@/_2.5pc/[dddd] &
(S_1\coten {S_0} S_1)^2 \ar[rrr]^-{\raisebox{8pt}{${}_m$}} \ar[d]^-{q_2^{-1} q_2^{-1}} &&&
S_1\coten {S_0} S_1 \ar[rr]^-{\raisebox{8pt}{${}_{q_2^{-1}}$}} &&
S_1^{(1)} S_1 \ar[ld]_(.7){\sigma_0^{(1)} 1} \ar[dddd]^-{p_{S_1} 1} \\
S_1 S_1^{(1)} S_0 \ar[r]^-{u11\sigma_0} \ar[d]^-{1p_{S_1}\sigma_0} &
(S_1^{(1)}S_1)^2 \ar[rr]^-{\sigma_0^{(1)} 1 \sigma_0^{(1)} 1} \ar[d]^-{p_{S_1} 1 p_{S_1} 1}  &&
(S_2^{(1)}S_1)^2 \ar@{-->}[rr]^-m \ar[d]^-{y_{(2,0)}y_{(2,0)}} &&
S_2^{(1)}S_1 \ar[ld]_(.7){y_{(2,0)}} \\
S_1^3 \ar[r]^-{u111}\ar[dd]^-{1m} &
S_1^4 \ar[r]^-{\sigma_0 \sigma_1 \sigma_0 \sigma_1} \ar@{=}[rd] &
S_2^4 \ar[r]^-{mm} \ar[d]^-{\partial_1 \partial_1 \partial_1 \partial_1} &
S_2^2 \ar[r]^-m &
S_2 \ar[dd]^-{\partial_1} \\
&& S_1^4 \ar[d]^-{mm} &&&
S_2^2 \ar[ul]_-m \ar[d]_-{\partial_1 \partial_1} \\
S_1^2 \ar@{=}[rr] &&
S_1^2 \ar[rr]_-m &&
S_1 &
S_1^2 \ar[l]^-m \ar@{=}[r] &
S_1^2 \ar[lu]_-{\sigma_0 \sigma_1} }
$$
Recall from Proposition \ref{prop:l1_y} that 
$\xymatrix@C=15pt{S_1^{(1)} \ar[r]^-{\sigma_0^{(1)}} & S_2^{(1)}}$ and
$\xymatrix@C=15pt{S_2^{(1)} \ar[r]^-{\partial_0^{(1)}} & S_1^{(1)}}$ are mutually inverse isomorphisms.
Hence the regions of  
the above diagram sharing the dashed arrow commute because both $q_2.\partial_0^{(1)}1$ and $y_{(2,0)}$ are multiplicative by \cite[Lemma 1.5]{Bohm:Xmod_I}, with respect to the multiplications induced by the respective distributive laws
$$
\xymatrix@C=15pt @R=10pt{
S_1 S_2^{(1)} \ar[r]^-{1\partial_0^{(1)}} &
S_1 S_1^{(1)} \ar[r]^-{1p_{S_1}} &
S_1^2 \ar[rrr]^-{(\sigma_0\diagcoten 1)(1\diagcoten \sigma_0)} &&&
(S_1\coten {S_0} S_1)^2 \ar[r]^-m &
S_1\coten {S_0} S_1 \ar[r]^-{q_2^{-1}} &
S_1^{(1)} S_1 \ar[r]^-{\sigma_0^{(1)} 1} &
S_2^{(1)} S_1\\
& \textrm{and}&
S_1 S_2^{(1)} \ar[rr]^-{\sigma_1 p_{S_2}} &&
S_2^2 \ar[r]^-m &
S_2 \ar[r]^-{y_{(2,0)}^{-1}} &
S_2^{(1)}S_1 }
$$
whose equality follows by the commutativity of the diagrams 
$$
\xymatrix@R=12pt@C=15pt{
&& S_2S_2^{(1)} \ar[r]^-{1p_{S_2}} \ar[d]^-{\partial_0\partial_0^{(1)}} &
S_2^2 \ar[r]^-m \ar[dd]^-{\partial_0 \partial_0} &
S_2 \ar[r]^-{y_{(2,0)}^{-1}} \ar[dd]^-{\partial_0} \ar@{}[rrdd]|-{\eqref{eq:y_id}} &
S_2^{(1)}S_1 \ar[r]^-{\partial_0^{(1)}1} &
S_1^{(1)} S_1 \ar[r]^-{q_2} \ar[dd]_-{1\partial_0} 
\ar@{}[rdd]|-{\txt{\tiny \cite[(3.8)]{Bohm:Xmod_II}}
} &
S_1\coten {S_0} S_1 \ar[dd]^-{p_1} \\
&& S_1 S_1^{(1)} \ar@/^.8pc/[rd]^-{1p_{S_1}} \\
S_1 S_1^{(1)} \ar[r]^-{\partial_0 1} \ar@/^1.2pc/[rruu]^-{\sigma_1 \sigma_0^{(1)}} \ar@/_1.2pc/[rrdd]_-{1p_{S_1}} &
S_0 S_1^{(1)} \ar@/^.8pc/[ru]^-{\sigma_0 1} \ar@/_.8pc/[rd]_-{1p_{S_1}} &&
S_1^2 \ar[r]^-m &
S_1 \ar[rr]^-{y_{(1,0)}^{-1}=q_1^{-1}} \ar@{=}@/_1.1pc/[rrr] &&
S_1^{(1)} S_0 \ar[r]^-{q_1} &
S_1 \\
&& S_0S_1 \ar@/_.8pc/[ru]_-{\sigma_0 1} \\
&& S_1^2 \ar[u]_-{\partial_0 1} \ar[r]_-{\raisebox{-8pt}{${}_{(\sigma_0\diagcoten 1)(1\diagcoten \sigma_0)}$}} &
(S_1\coten {S_0} S_1)^2 \ar[uu]_-{p_1p_1} \ar[rrrr]_-{\raisebox{-8pt}{${}_m$}} &&&&
S_1\coten {S_0} S_1 \ar[uu]_-{p_1}}
$$
$$
\xymatrix@R=25pt@C=15pt{
& S_2S_2^{(1)} \ar[rr]^-{1p_{S_2}} \ar[d]^-{\partial_2p_I} &&
S_2^2 \ar[r]^-m \ar[d]^-{\partial_2 \partial_2} &
S_2 \ar[r]^-{y_{(2,0)}^{-1}} \ar[d]_-{\partial_2} \ar@{}[rd]|-{\eqref{eq:y_more}} &
S_2^{(1)}S_1 \ar[r]^-{\partial_0^{(1)}1} \ar[d]^-{p_I 1} &
S_1^{(1)} S_1 \ar[r]^-{q_2} \ar[d]_-{p_I 1} \ar@{}[rd]|-{
\txt{\tiny \cite[(3.9)]{Bohm:Xmod_II}
}} &
S_1\coten {S_0} S_1 \ar[d]^-{p_2} \\
S_1 S_1^{(1)} \ar[r]^-{1p_I} \ar@/^1.2pc/[ru]^-{\sigma_1 \sigma_0^{(1)}} 
\ar@/_1.2pc/[rrd]_-{1p_{S_1}} &
S_1 \ar@/^.8pc/[rr]^-{1u} \ar[r]_-{1u} &
S_1S_0 \ar[r]_-{1\sigma_0} &
S_1^2 \ar[r]^-m &
S_1 \ar@{=}[r] &
S_1 \ar@{=}[r] &
S_1 \ar@{=}[r] &
S_1 \\
&& S_1^2 \ar[u]_-{1 \partial_1} 
\ar[r]_-{\raisebox{-8pt}{${}_{(\sigma_0\diagcoten 1)(1\diagcoten \sigma_0)}$}} &
(S_1\coten {S_0} S_1)^2 \ar[u]_-{p_2p_2} 
\ar[rrrr]_-{\raisebox{-8pt}{${}_m$}} &&&&
S_1\coten {S_0} S_1. \ar[u]_-{p_2}}
$$

A morphism 
$\xymatrix@C=12pt{S \ar[r]^-F &  S'}$ in $\mathsf{Simp}^1\mathsf{Mon}_{\mathcal S}(\mathsf C)$ is sent to the morphism of reflexive graphs
$(\xymatrix@C=12pt{S_0 \ar[r]^-{F_0} & S'_0},\xymatrix@C=12pt{S_1 \ar[r]^-{F_0} & S'_1})$; it is an $\mathcal S$-relative functor by \cite[Proposition 3.9]{Bohm:Xmod_II}.

In the opposite direction $\mathsf{CatMon}_{\mathcal S}(\mathsf C) \to \mathsf{Simp}^1\mathsf{Mon}_{\mathcal S}(\mathsf C)$, we send an $\mathcal S$-relative category 
$\xymatrix@C=20pt{
B \ar@{ >->}|(.55){\, i \,}[r] &
A \ar@{->>}@<-5pt>[l]_-s \ar@{->>}@<5pt>[l]^- t &
A \coten B A \ar[l]_-d}$
to its `$\mathcal S$-relative nerve'
$$
\xymatrix@C=30pt{
B \ar[r]|-{\,i \,} &
\ar@<-5pt>[l]_-{t}\ar@<5pt>[l]^-{s} A
\ar@<7pt>[r]|-{\,1\diagcoten i \,} \ar@<-7pt>[r]|-{\,i\diagcoten 1 \,} &
\ar[l]|-{\, d \,}
\ar@<-12pt>[l]_-{\, p_1 \,}\ar@<12pt>[l]^-{\ p_2 \,}
A \coten B A \dots A^{\expcoten B n-1} 
\ar@<14pt>[r]|-{\,\sigma_0 \,} \ar@<-17pt>[r]|-{\,\sigma_{n-1} \,} &
\ar@<-21pt>[l]_-{\,\partial_0 \,} \ar@<-7pt>[l]|-{\,\partial_1 \,}^-\vdots
\ar@<24pt>[l]^-{\,\partial_n \,}
A^{\expcoten B n}  \dots}
$$
where for any positive integer $n$ we put
$$
\begin{array}{ll}
\sigma_k:= 1^{\diagcoten  n-k-1} \morcoten i \morcoten 1^{\diagcoten k}
& \textrm{ for } 0\leq k <n \\
\partial_0:= 1^{\diagcoten  n-1} \morcoten t=p_{1\dots n-1}& \\
\partial_k:= 1^{\diagcoten  n-k-1} \morcoten d \morcoten 1^{\diagcoten k-1}
& \textrm{ for } 0 < k <n \\
\partial_n:= s\morcoten 1^{\diagcoten  n-1} =p_{2\dots n}  .&\\
\end{array}
$$
By the functoriality of $\morcoten$ --- cf. \cite[Proposition 3.5]{Bohm:Xmod_I} --- they constitute a simplicial monoid which obeys property (b) by
construction and for which the morphisms $q_n$ of part (c) are invertible. In order to see that it has Moore length 1, note first that $A^{(1)}=A \coten B I$ exists; see \cite[Theorem 1.1]{Bohm:Xmod_II}. We claim that also for any $n>0$ there is an $\mathcal S$-relative pullback
$$
\xymatrix@C=15pt@R=15pt{
A\coten B I \ar[rr]^-{p_I} \ar[d]_-{p_A} &&
I \ar@/^1.1pc/[dd]^-u \ar[d]_-u \\
A \ar[d]_-{1\diagcoten i \diagcoten \cdots \diagcoten i} \ar[rr]^-s &&
B \ar[d]_-{i \diagcoten \cdots \diagcoten i} \\
A^{\expcoten B n} \ar[rr]_-{p_{2\dots n}} &&
A^{\expcoten B n-1}}
$$
determining $(A^{\expcoten B n})^{(1)}$. By construction 
$\xymatrix@C=12pt{A & A\coten B I \ar[r]^-{p_I} \ar[l]_-{p_A} & I}\in \mathcal S$ hence by (POST) also 
$\xymatrix@C=12pt{A^{\expcoten B n}  && \ar[ll]_-{1\diagcoten i \diagcoten \cdots \diagcoten i} A & A\coten B I \ar[r]^-{p_I} \ar[l]_-{p_A} & I}\in \mathcal S$.
If some morphisms $f$ and $g$ render commutative the first diagram of 
$$
\xymatrix@C=15pt@R=15pt{
X \ar@{-->}[rd]^-h \ar@/^1.2pc/[rrrd]^-g \ar@/_1.2pc/[rddd]_-f \\
& A \coten B I \ar[rr]^-{p_I} \ar[d]^-{p_A} &&
I \ar[dd]^-u \\
& A \ar[d]^-{1\diagcoten i \diagcoten \cdots \diagcoten i} \\
& A^{\expcoten B n} \ar[rr]_-{p_{2\dots n}} &&
A^{\expcoten B n-1}}
\qquad
\xymatrix@C=15pt@R=18pt{
X \ar@{-->}[rd]^-h \ar@/^1.2pc/[rrrd]^-g \ar[ddd]_-f \\
& A \coten B I \ar[rr]^-{p_I} \ar[dd]^-{p_A} &&
I \ar[dd]^-u \\
\\
A^{\expcoten B n} \ar[r]_-{p_1} &
A \ar[rr]_-s &&
B} 
$$
then they make commute the second diagram as well by the commutativity of
$$
\xymatrix@R=15pt{
X \ar[rr]^-g \ar[d]_-f &&
I \ar[ld]_(.6)u \ar[d]_-u  \ar@/^1.1pc/[dd]^-u \\
A^{\expcoten B n} \ar[r]^-{p_{2\dots n}} \ar[d]_-{p_1}&
A^{\expcoten B n-1} \ar[r]^-{p_1} &
A \ar[d]_-t \\
A \ar[rr]_-s &&
B}
$$
Whenever $\xymatrix@C=12pt{A^{\expcoten B n} & \ar[l]_-f X \ar[r]^-g & I} \in \mathcal S$ also 
$\xymatrix@C=12pt{A & \ar[l]_-{p_1} A^{\expcoten B n} & \ar[l]_-f X \ar[r]^-g & I} \in \mathcal S$ by (POST). Hence by the universality of the $\mathcal S$-relative pullback in the second diagram, it has a unique filler $h$. But the same morphism $h$ is a filler also for the first diagram by the commutativity of both diagrams
$$
\xymatrix{
&&& A^{\expcoten B n} \ar[d]^-{p_1} \\
X \ar[r]^-f &
A^{\expcoten B n} \ar[r]^-{p_1} &
A \ar@{=}[r] \ar@/^1.2pc/[ru]^-{1\diagcoten i \diagcoten \cdots \diagcoten i}&
A} \qquad
\xymatrix{
& A^{\expcoten B n} \ar[r]^-{p_1} &
A \ar[r]^-{1\diagcoten i \diagcoten \cdots \diagcoten i} \ar[d]^-s &
A^{\expcoten B n} \ar[d]^-{p_{2\dots n}} \\
X \ar[r]^-g \ar@/^1.2pc/[ru]^-f \ar@/_1.2pc/[rrrd]_-f &
I \ar[r]^-u \ar@/_1.1pc/[rr]_-u &
B \ar[r]^-{i \diagcoten \cdots \diagcoten i}  &
A^{\expcoten B n-1} . \\
&&& A^{\expcoten B n} \ar[u]_-{p_{2\dots n}}}
$$
The unique filler of the second diagram is the unique filler for the first diagram since any filler $h$ of the first diagram is clearly a filler for the second diagram as well. 
For the reflection property let us use again that $1\morcoten i \morcoten \cdots \morcoten i$ is a monomorphism split by $p_1$. Then if
$\xymatrix@C=12pt{Y & \ar[l]_-f X \ar[r]^-g &A\coten B I \ar[r]^-{p_A} & A \ar[rr]^-{1\diagcoten i \diagcoten \cdots \diagcoten i} && A^{\expcoten B n}} \in \mathcal S$ then by (POST) also 
$\xymatrix@C=12pt{Y & \ar[l]_-f X \ar[r]^-g &A\coten B I \ar[r]^-{p_A} & A} \in \mathcal S$ . So whenever also 
$\xymatrix@C=12pt{Y & \ar[l]_-f X \ar[r]^-g &A\coten B I \ar[r]^-{p_I} & I} \in \mathcal S$, it follows from the reflectivity of 
$\xymatrix@C=12pt{A & \ar[l]_-{p_A} A\coten B I \ar[r]^-{p_I} & I}$ that 
$\xymatrix@C=12pt{Y & \ar[l]_-f X \ar[r]^-g &A\coten B I} \in \mathcal S$. Reflectivity on the left is proven symmetrically.

With this we proved that $(A^{\expcoten B n} )^{(1)}$ exists for any $n>0$ and
it is isomorphic to $A^{(1)}=A \coten B I$. 
The morphism $\partial_{n-1}^{(1)}$ is the identity morphism, being defined as
the unique morphism fitting the first commutative diagram of
$$
\xymatrix@C=15pt@R=15pt{
A \coten B I \ar@{-->}[rd]^(.55){\partial_{n-1}^{(1)}} \ar@/^1.2pc/[rrrd]^-{p_I}
\ar[dd]_-{p_A} \\
& A \coten B I  \ar[rr]^-{p_I} \ar[d]^-{p_A} &&
I \ar[dd]^-u \\
A \ar@{=}[r] \ar[d]_-{1\diagcoten i \diagcoten \cdots \diagcoten i} &
A \ar[d]_-{1\diagcoten i \diagcoten \cdots \diagcoten i} \\
A^{\expcoten B n} \ar[r]_-{d \diagcoten 1 \diagcoten \cdots \diagcoten 1} &
A^{\expcoten B n-1} \ar[rr]_-{p_{2\dots n-1}} &&
A^{\expcoten B n-2}}\qquad\qquad
\raisebox{17pt}{$\xymatrix@C=46pt@R=46pt{
\\
I \ar@{=}[r] \ar[d]_-u &
I \ar[d]^-u \\
A \coten B I \ar@{=}[r] &
A \coten B I .}$}
$$
Since 
$\xymatrix@C=12pt{A\coten B I & \ar[l]_-u I \ar@{=}[r] & I}\in \mathcal S$ by
the unitality of $\mathcal S$, the second diagram is obviously an $\mathcal
S$-relative pullback. This proves that for any $n>1$ 
$$
(A^{\expcoten B n} )^{(2)}=
(A^{\expcoten B n} )^{(1)} 
\raisebox{-5pt}{$\coten {(A^{\expcoten B n-1} )^{(1)}}$} I \cong
(A\coten B I)  
\raisebox{-7pt}{$\stackrel{\displaystyle \Box} {\scriptscriptstyle {A \expcoten B I}}$} I
$$ 
exists and it is isomorphic to $I$. Then by Corollary \ref{cor:Moore_length}
$S$ has Moore length 1.
Above we proved that 
$\xymatrix{(A^{\expcoten B n})^{(1)} \ar[r]^-{\partial_{n-1}^{(1)}} & 
(A^{\expcoten B n-1})^{(1)}}$ is invertible; then so is its inverse 
$\sigma_{n-2}^{(1)}$. Therefore by Proposition \ref{prop:l1_y} the morphism
$y_{(n,k)}$ of \eqref{eq:y_nk} is invertible for all $0<k<n$. For any $n>0$ the morphism
$y_{(n,0)}$ takes now the form 
$$
\xymatrix{
(A^{\expcoten B n})^{(1)}A^{\expcoten B n-1}\cong
(A\coten B I)A^{\expcoten B n-1} \ar[r]^-{p_A 1} &
A A^{\expcoten B n-1} 
\ar[rr]^-{(1\diagcoten i \diagcoten \cdots \diagcoten i)(i\diagcoten 1)} &&
(A^{\expcoten B n})^2 \ar[r]^-m &
A^{\expcoten B n}}
$$
in which we recognize the invertible morphism $q_n$.
This proves that the nerve of an object of 
$\mathsf{CatMon}_{\mathcal S}(\mathsf C)$ is indeed an object of
$\mathsf{Simp}^1\mathsf{Mon}_{\mathcal S}(\mathsf C)$. 
A morphism 
$$
\xymatrix@C=25pt{
(B \ar@{ >->}|(.55){\, i \,}[r] &
A \ar@{->>}@<-5pt>[l]_-s \ar@{->>}@<5pt>[l]^- t &
A \coten B A \ar[l]_-d) \ar[r]^-{(b,a)} &
(B' \ar@{ >->}|(.55){\, i' \,}[r] &
A' \ar@{->>}@<-5pt>[l]_-{s'} \ar@{->>}@<5pt>[l]^-{t'} &
A' \coten {B'} A' \ar[l]_-{d'})}
$$
is sent to the family 
$(\xymatrix@C=12pt{B \ar[r]^-b & B'},
\{\xymatrix@C=12pt{A^{\expcoten B n} \ar[r]^-{a^{\diagcoten n}} & 
A^{\prime \expcoten B n}}\}_{n>0})$
which is clearly a morphism of simplicial monoids.

It remains to see that the above constructed functors are mutually inverse
equivalences. Sending an $\mathcal S$-relative category 
(and $\mathcal S$-relative functor, respectively) to its nerve and then truncating it as above, we clearly re-obtain the $\mathcal S$-relative
category (and $\mathcal S$-relative functor, respectively) that we started
with. Iterating the functors in the opposite order, an object $S$ of
$\mathsf{Simp}^1\mathsf{Mon}_{\mathcal S}(\mathsf C)$ is sent to
\begin{equation} \label{eq:image}
\xymatrix@C=35pt{
S_0 \ar[r]|-{\,\sigma_0 \,} &
\ar@<-5pt>[l]_-{\partial_0}\ar@<5pt>[l]^-{\partial_1} S_1
\ar@<7pt>[r]|-{\,1\diagcoten \sigma_0 \,} \ar@<-7pt>[r]|-{\,\sigma_0\diagcoten 1 \,} &
\ar[l]|-{\, d \,}
\ar@<-12pt>[l]_-{\, p_1 \,}\ar@<12pt>[l]^-{\ p_2 \,}
S_1 \coten {S_0} S_1 \dots S_1^{\expcoten {S_0} n-1} 
\ar@<16pt>[r]|-{\,\widetilde \sigma_0 \,} \ar@<-17pt>[r]|-{\,\widetilde\sigma_{n-1} \,} &
\ar@<-21pt>[l]_-{\,\widetilde\partial_0 \,} \ar@<-7pt>[l]|-{\,\widetilde\partial_1 \,}^-\vdots
\ar@<24pt>[l]^-{\,\widetilde\partial_n \,}
S_1^{\expcoten {S_0} n}  \dots}
\end{equation}
where 
$d=\xymatrix@C=16pt{
S_1 \coten {S_0} S_1 \ar[r]^-{q_2^{-1}} &
(S_1 \coten {S_0} I)S_1 \ar[r]^-{p_{S_1}1} &
S_1^2 \ar[r]^-m &
S_1}$
(see \cite[Proposition 3.8]{Bohm:Xmod_II}) and for any positive integer $n$,
$$
\begin{array}{ll}
\widetilde\sigma_k:= 1^{\diagcoten  n-k-1} \morcoten \sigma_0 \morcoten 1^{\diagcoten k}
& \textrm{ for } 0\leq k <n \\
\widetilde\partial_0:= 1^{\diagcoten  n-1} \morcoten \partial_0=p_{1\dots n-1}& \\
\widetilde\partial_k:= 1^{\diagcoten  n-k-1} \morcoten d \morcoten 1^{\diagcoten k-1}
& \textrm{ for } 0 < k <n \\
\widetilde\partial_n:= \partial_1 \morcoten 1^{\diagcoten  n-1} =p_{2\dots n}  .&\\
\end{array}
$$
Note that together with the family of morphisms $\{q_n\}_{n> 0}$ they render
commutative the following diagrams. For all $0\leq i <n$,
\begin{equation} \label{eq:q_sigma}
\xymatrix@C=20pt{
S_1^{(1)} S_1^{\expcoten {S_0} n-1} \ar[r]_-{p_{S_1} \widetilde \sigma_{n-1}}
\ar[d]^-{1\widetilde \sigma_i} \ar@/^2pc/[rrrrrr]^-{q_n} &
S_1 S_1^{\expcoten {S_0} n}  \ar[r]_-{\widetilde \sigma_0 1}
\ar[d]_-{1\widetilde \sigma_i} &
\cdots \ar[r]_-{\widetilde \sigma_0 1} & 
S_1^{\expcoten {S_0} n-i} S_1^{\expcoten {S_0} n} 
\ar[d]^-{\widetilde \sigma_0\widetilde \sigma_i} 
\ar[r]_-{\widetilde \sigma_0 1} &
\cdots \ar[r]_-{\widetilde \sigma_0 1} & 
(S_1^{\expcoten {S_0} n})^2 \ar[r]_-m 
\ar[d]^-{\widetilde \sigma_i\widetilde \sigma_i} &
S_1^{\expcoten {S_0} n}\ar[d]_-{\widetilde \sigma_i} \\
S_1^{(1)} S_1^{\expcoten {S_0} n} \ar[r]^-{p_{S_1} \widetilde \sigma_{n}}
\ar@/_1.5pc/[rrrrrr]_-{q_{n+1}} &
S_1 S_1^{\expcoten {S_0} n+1}  \ar[r]^-{\widetilde \sigma_0 1} &
\cdots \ar[r]^-{\widetilde \sigma_0 1} & 
S_1^{\expcoten {S_0} n+1-i} S_1^{\expcoten {S_0} n+1} \ar[r]^-{\widetilde \sigma_0 1} &
\cdots \ar[r]^-{\widetilde \sigma_0 1} & 
(S_1^{\expcoten {S_0} n+1})^2 \ar[r]^-m &
S_1^{\expcoten {S_0} n+1} .}
\end{equation}
For $1< i <n$,
\begin{equation} \label{eq:q_delta}
\xymatrix@C=7pt@R=30pt{
S_1^{(1)} S_1^{\expcoten {S_0} n} \ar[r]_-{p_{S_1} \widetilde \sigma_{n}}
\ar[d]^-{1\widetilde \partial_i} \ar@/^2pc/[rrrrrrr]^-{q_{n+1}} &
S_1 S_1^{\expcoten {S_0} n+1}  \ar[r]_-{\widetilde \sigma_0 1}
\ar[d]_-{1\widetilde \partial_i} &
\cdots \ar[r]_-{\widetilde \sigma_0 1} & 
S_1^{\expcoten {S_0} n-i} S_1^{\expcoten {S_0} n+1} 
\ar[rd]_(.3){1\widetilde \partial_i} \ar[r]_-{\widetilde \sigma_0 1} &
S_1^{\expcoten {S_0} n+1-i} S_1^{\expcoten {S_0} n+1}
\ar[d]^-{\widetilde \partial_0\widetilde \partial_i} 
\ar[r]_-{\widetilde \sigma_0 1} &
\cdots \ar[r]_-{\widetilde \sigma_0 1} & 
\raisebox{-5pt}{$(S_1^{\expcoten {S_0} n+1})^2$} \ar[r]_-m 
\ar[d]^-{\widetilde \partial_i\widetilde \partial_i} &
S_1^{\expcoten {S_0} n+1}\ar[d]_-{\widetilde \partial_i} \\
S_1^{(1)} S_1^{\expcoten {S_0} n-1} 
\ar[r]^-{\raisebox{12pt}{${}_{p_{S_1} \widetilde \sigma_{n-1}}$}}
\ar@/_1.5pc/[rrrrrrr]_-{q_{n}} &
S_1 S_1^{\expcoten {S_0} n}  \ar[rr]^-{\widetilde \sigma_0 1} &&
\cdots \ar[r]^-{\widetilde \sigma_0 1} & 
S_1^{\expcoten {S_0} n-i} S_1^{\expcoten {S_0} n} \ar[r]^-{\widetilde \sigma_0 1} &
\cdots \ar[r]^-{\widetilde \sigma_0 1} & 
(S_1^{\expcoten {S_0} n})^2 \ar[r]^-m &
S_1^{\expcoten {S_0} n}}
\end{equation}
and for $0\leq i \leq 1<n$ the analogous one
\begin{equation} \label{eq:q_delta01}
\xymatrix@C=32pt@R=30pt{
S_1^{(1)} S_1^{\expcoten {S_0} n} \ar[r]_-{p_{S_1} \widetilde \sigma_{n}}
\ar[d]^-{1\widetilde \partial_i} \ar@/^2pc/[rrrrr]^-{q_{n+1}} &
S_1 S_1^{\expcoten {S_0} n+1}  \ar[r]_-{\widetilde \sigma_0 1}
\ar[d]_-{1\widetilde \partial_i} &
\cdots \ar[r]_-{\widetilde \sigma_0 1} & 
S_1^{\expcoten {S_0} n} S_1^{\expcoten {S_0} n+1} 
\ar[rd]_(.3){1\widetilde \partial_i} \ar[r]_-{\widetilde \sigma_0 1} &
\raisebox{-5pt}{$(S_1^{\expcoten {S_0} n+1})^2$} \ar[r]_-m 
\ar[d]^-{\widetilde \partial_i\widetilde \partial_i} &
S_1^{\expcoten {S_0} n+1}\ar[d]_-{\widetilde \partial_i} \\
S_1^{(1)} S_1^{\expcoten {S_0} n-1} 
\ar[r]^-{\raisebox{12pt}{${}_{p_{S_1} \widetilde \sigma_{n-1}}$}}
\ar@/_1.5pc/[rrrrr]_-{q_{n}} &
S_1 S_1^{\expcoten {S_0} n}  \ar[rr]^-{\widetilde \sigma_0 1} &&
\cdots \ar[r]^-{\widetilde \sigma_0 1} & 
(S_1^{\expcoten {S_0} n})^2 \ar[r]^-m &
S_1^{\expcoten {S_0} n}.}
\end{equation}

We claim that a natural isomorphism from $S$ to its image in \eqref{eq:image} can be constructed iteratively for all $n\geq 0$ as 
\begin{equation} \label{eq:wn}
\begin{array}{l}
\bullet \  \ w_0:=1 \\
\bullet \  \ w_n:= \xymatrix@C=20pt{
S_n\ar[r]^-{y_{(n,0)}^{-1}} & 
S_n^{(1)} S_{n-1} \ar[r]^-{\partial_0^{(1)}1} &
\cdots \ar[r]^-{\partial_0^{(1)}1} &
S_1^{(1)} S_{n-1}  \ar[r]^-{1w_{n-1}} &
S_1^{(1)} S_1^{\expcoten {S_0} n-1} \ar[r]^-{q_n} &
S_1^{\expcoten {S_0} n}}.
\end{array}
\end{equation}
Note that it gives $w_1=1$ but non-trivial higher components. Let us prove by induction in $n$ the equality
\begin{equation} \label{eq:p1.wn}
\xymatrix@C=15pt{
S_n \ar[r]^-{w_n} &
S_1^{\expcoten {S_0} n} \ar[r]^-{p_1} &
S_1}=
\xymatrix@C=15pt{
S_n \ar[r]^-{\partial_0} &
\cdots \ar[r]^-{\partial_0} &
S_1}
\qquad \textrm{for} \ n>0.
\end{equation}
For $n=1$ both sides yield the identity morphism thus the equality holds. If it holds for some $n>0$ then the following diagram commutes
$$
\xymatrix@R=15pt{
S_{n+1} \ar[r]_-{y_{(n+1,0)}^{-1}} \ar@/^2pc/[rrrrr]^-{w_{n+1}} \ar[d]_-{\partial_0} \ar@{}[rrrdd]|-{\eqref{eq:y_id}} & 
S_{n+1}^{(1)} S_{n} \ar[r]_-{\partial_0^{(1)}} &
\cdots \ar[r]_-{\partial_0^{(1)}} &
S_1^{(1)} S_{n-1}  \ar[r]_-{1w_{n}} \ar[d]_-{1 \partial_0} \ar@{}[rdd]|-{\textrm{(IH)}}&
S_1^{(1)} S_1^{\expcoten {S_0} n} \ar[r]_-{q_{n+1}} \ar[d]^-{1p_1} 
\ar@{}[rdd]|-{
\txt{\tiny \cite[(3.8)]{Bohm:Xmod_II}}
} &
S_1^{\expcoten {S_0} n+1} \ar[dd]^-{p_1}  \\
\vdots  \ar[d]_-{1 \partial_0} &&&
\vdots  \ar[d]_-{1 \partial_0} &
S_1^{(1)} S_1 \ar[d]^-{1 \partial_0} \\
S_1 \ar[rrr]^-{y_{(1,0)}^{-1}} \ar@/_1.5pc/@{=}[rrrrr] &&&
S_1^{(1)} S_0 \ar@{=}[r] &
S_1^{(1)} S_0 \ar[r]^-{q_1} &
S_1\\
&}
$$
proving \eqref{eq:p1.wn} for all positive $n$. (The region marked by (IH) commutes by the induction hypothesis.)

The morphisms $w_n$ of \eqref{eq:wn} are composites of isomorphisms (see Proposition \ref{prop:l1_y}) hence they are invertible. They are compatible with the units of the domain and codomain monoids by the unitality of the constituent monoid morphisms. Multiplicativity is checked by induction in $n$ again. Trivially, $w_0=1$ is multiplicative. If $w_{n-1}$ is multiplicative for some $n>0$ then by 
\cite[Corollary 1.7]{Bohm:Xmod_I} so are both isomorphisms
\begin{equation}\label{eq:w_factors}
 \xymatrix{
S_1^{(1)} S_{n-1} \ar[r]^-{\sigma_0^{(1)}1} &
\cdots \ar[r]^-{\sigma_0^{(1)}1} &
S_n^{(1)} S_{n-1} \ar[r]^-{y_{(n,0)}} &
S_n}
\ \  \textrm{and} \ \ 
 \xymatrix{
S_1^{(1)} S_{n-1} \ar[r]^-{1w_{n-1}} &
S_1^{(1)} S_1^{\expcoten {S_0} n-1} \ar[r]^-{q_n} &
S_1^{\expcoten {S_0} n}}
\end{equation}
with respect to the multiplications induced by the respective distributive laws 
$$
\scalebox{.99}{$
\xymatrix@R=10pt@C=5pt{
S_{n-1} S_1^{(1)} \ar[r]^-{1\sigma_0^{(1)} } &
\cdots \ar[r]^-{1\sigma_0^{(1)}} &
S_{n-1} S_n^{(1)} \ar[rrr]^-{\sigma_{n-1} p_{S_n}} &&&
S_n^2 \ar[r]^-m &
S_n \ar[r]^-{y_{(n,0)}^{-1}} &
S_n^{(1)} S_{n-1} \ar[r]^-{\partial_0^{(1)} 1} &
\cdots \ar[r]^-{\partial_0^{(1)}1} &
S_1^{(1)} S_{n-1} \\
S_{n-1} S_1^{(1)} \ar[r]^-{\raisebox{10pt}{${}_{w_{n-1}1}$}} &
S_1^{\expcoten {S_0} n-1}S_1^{(1)} \ar[r]^-{\raisebox{10pt}{${}_{1p_{S_1}}$}} &
S_1^{\expcoten {S_0} n-1}S_1 \ar[rrrr]^-{\raisebox{10pt}{${}_{(\sigma_0 \diagcoten 1)(1\diagcoten \sigma_0 \diagcoten \cdots \diagcoten \sigma_0)}$}} &&&&
(S_1^{\expcoten {S_0} n} )^2 \ar[r]^-{\raisebox{10pt}{${}_{m}$}} &
S_1^{\expcoten {S_0} n}  \ar[r]^-{\raisebox{10pt}{${}_{q_n^{-1}} $}} &
S_1^{(1)} S_1^{\expcoten {S_0} n-1} \ar[r]^-{\raisebox{13pt}{${}_{\ \ 1w_{n-1}^{-1}}$}} &
S_1^{(1)} S_{n-1}.}$}
$$
Their equality follows by the commutativity of the diagrams of Figure \ref{fig:equal_dlaws}, whose right verticals are joint monomorphisms.  
\begin{figure} 
\centering
\begin{sideways}
\scalebox{.95}{
$\xymatrix@C=5pt@R=15pt{
&&&&&& 
S_{n-1} S_n^{(1)} \ar[rr]^-{1p_{S_n}} \ar[d]^-{\partial_0\partial_0^{(1)}} &&
S_{n-1} S_n \ar[rr]^-{\sigma_{n-1}1} \ar[d]^-{\partial_0\partial_0} &&
S_n^2 \ar[rr]^-m \ar[d]^-{\partial_0\partial_0} &&
S_n \ar[rr]^-{w_n} \ar@{}[rrdd]|-{\eqref{eq:p1.wn}}
\ar[d]^-{\partial_0}
&&
S_1^{\expcoten {S_0} n} \ar[dd]^-{p_1} \\
&& 
&&&&
\vdots \ar[d]^-{\partial_0\partial_0^{(1)}} &&
\vdots \ar[d]^-{\partial_0\partial_0} &&
\vdots \ar[d]^-{\partial_0\partial_0} &&
\vdots \ar[d]^-{\partial_0} \\
S_{n-1}S_1^{(1)}
\ar@/^1.2pc/[rrrrrruu]|(.44)*=0@{>}
|-{\raisebox{4pt}{\rotatebox[origin=c]{15}{$\ \dots\ $}}}
^(.3){1\sigma_0^{(1)}}^(.7){1\sigma_0^{(1)}} 
\ar@/_1.2pc/[rrrrdd]_-{w_{n-1}1} 
\ar[rr]^-{\partial_0 1} &&
\cdots \ar[rr]^-{\partial_0 1} \ar@{}[rrd]_-{\eqref{eq:p1.wn}} &&
S_1S_1^{(1)} \ar[rr]^-{\partial_0 1} &&
S_0S_1^{(1)} \ar[rr]^-{1p_{S_1}} &&
S_0S_1 \ar[rr]^-{\sigma_0 1} &&
S_1^2 \ar[rr]^-m &&
S_1 \ar@{=}[rr] &&
S_1 \\
&&&&&&&& S_1^2 \ar[u]_-{\partial_0 1} \\
&&&& S_1^{\expcoten {S_0} n-1} S_1^{(1)} \ar[uu]_-{p_11} \ar[rrrr]_-{1p_{S_1}} &&&&
S_1^{\expcoten {S_0} n-1} \ar[u]_-{p_11} S_1
\ar[rr]_-{\raisebox{-10pt}{${}_{(\sigma_0 \diagcoten 1)
(1\diagcoten \sigma_0 \diagcoten \cdots \diagcoten \sigma_0)}$}} &&
(S_1^{\expcoten {S_0} n})^2 
\ar[rrrr]_-m
\ar[uu]_-{p_1p_1} 
&&&&
S_1^{\expcoten {S_0} n}\ar[uu]_-{p_1} \\
\\
\\
&& S_{n-1} S_n^{(1)} \ar[r]^-{\raisebox{10pt}{${}_{1p_{S_n}}$}} \ar[dd]^-{1p_I} &
S_{n-1} S_n \ar[r]^-{\sigma_{n-1}1} \ar[dd]^-{1\partial_n} &
S_n^2 \ar[r]^-m \ar[dd]^-{\partial_n\partial_n} &
S_n \ar[r]_-{y_{(n,0)}^{-1}} \ar[dd]_-{\partial_n} 
\ar@{}[rdd]|-{\eqref{eq:y_more}} \ar@/^2pc/[rrrrrrrrr]^-{w_n} &
\raisebox{-5pt}{$S_n^{(1)} S_{n-1}$} 
\ar[r]_-{\raisebox{-11pt}{${}_{\partial_0^{(1)} 1}$}} \ar[dd]^-{p_I1} &
\cdots \ar[r]_-{\partial_0^{(1)} 1} &
S_1^{(1)} S_{n-1} \ar[rr]_-{\raisebox{-10pt}{${}_{1w_{n-1}}$}} \ar[dd]^-{p_I 1} &&
\raisebox{0pt}{$S_1^{(1)} S_1^{\expcoten {S_0} n-1}$} \ar[rrrr]_-{q_n} 
\ar[dd]^-{p_I 1}  &&
\ar@{}[dd]|(.4){
\txt{\tiny \cite[(3.9)]{Bohm:Xmod_II}}}&&
S_1^{\expcoten {S_0} n} \ar[dd]^-{p_{2\dots n}} \\
\\
S_{n-1}S_1^{(1)}
\ar@/^1.2pc/[rruu]|(.3)*=0@{>}
|-{\rotatebox[origin=c]{85}{$\ddots$}\ \ }
^(.3){1\sigma_0^{(1)}} ^(.7){1\sigma_0^{(1)}} 
\ar@/_2pc/[rrrrrrrrrrdd]_-{w_{n-1}1} 
\ar[rr]^-{1p_I} &&
S_{n-1} \ar[r]^-{1u} \ar@/_1.1pc/@{=}[rrr] &
S_{n-1}^2 \ar@{=}[r] &
S_{n-1}^2 \ar[r]^-m &
S_{n-1} \ar@{=}[r] &
S_{n-1} \ar@{=}[rr] &&
S_{n-1} \ar[rr]^-{w_{n-1}} &&
S_1^{\expcoten {S_0} n-1} \ar[r]_-{1u} \ar@/^2pc/@{=}[rrrr] &
S_1^{\expcoten {S_0} n-1} S_0 
\ar[rr]_-{\raisebox{-10pt}{${}_{1(\sigma_0 \diagcoten \cdots \diagcoten \sigma_0)}$}} &&
(S_1^{\expcoten {S_0} n-1})^2 \ar[r]_-m &
S_1^{\expcoten {S_0} n-1} \\
\\
&&&&&&&&&& S_1^{\expcoten {S_0} n-1}S_1^{(1)} \ar[r]_-{1p_{S_1}}
\ar[uu]^-{1p_I} &
S_1^{\expcoten {S_0} n-1}S_1 \ar[uu]_-{1\partial_1}
\ar[rr]_-{\raisebox{-10pt}{${}_{(\sigma_0 \diagcoten 1)
(1\diagcoten \sigma_0 \diagcoten \cdots \diagcoten \sigma_0)}$}} &&
(S_1^{\expcoten {S_0} n})^2 \ar[r]_-m \ar[uu]^-{p_{2\dots n}p_{2\dots n}} &
S_1^{\expcoten {S_0} n} \ar[uu]_-{p_{2\dots n}}} $}
\end{sideways}
\caption{Multiplicativity of $w_n$}
\label{fig:equal_dlaws}
\end{figure}
Since $w_n$ is the composite of the second morphism of \eqref{eq:w_factors}
with the inverse of the first one, we conclude that it is multiplicative too.

Next we check by induction in $n$ that $\{w_n \}_{n\geq 0}$ is a simplicial
morphism; that is, 
\begin{equation}\label{eq:w_simp}
\xymatrix{
S_n \ar[r]^-{w_n} \ar[d]_-{\sigma_i} &
S_1^{\expcoten {S_0} n} \ar[d]^-{\widetilde \sigma_i} \\
S_{n+1} \ar[r]_-{w_{n+1}} &
S_1^{\expcoten {S_0} n+1}} \qquad
\xymatrix{
S_{n+1} \ar[r]^-{w_{n+1}} \ar[d]_-{\partial_j} &
S_1^{\expcoten {S_0} n+1} \ar[d]^-{\widetilde \partial_j} \\
S_{n} \ar[r]_-{w_{n}} &
S_1^{\expcoten {S_0} n}}
\end{equation}
commute for all $n\geq 0$ and $0\leq i \leq n$ and $0\leq j \leq n+1$.
Note that the induction must be started with $n=1$ because
the first diagram of \eqref{eq:w_delta_penult} below only makes sense for
$n>0$.
For $n=0$ the diagrams of \eqref{eq:w_simp} commute because $w_0$ and $w_1$
are the identity morphisms and the equalities 
$\xymatrix@C=12pt{S_0 \ar[r]^-{\widetilde \sigma_0} & S_1}=
\xymatrix@C=12pt{S_0 \ar[r]^-{\sigma_0} & S_1}$ and 
$\xymatrix@C=12pt{S_1 \ar[r]^-{\widetilde \partial_i} & S_0}=
\xymatrix@C=12pt{S_1 \ar[r]^-{\partial_i} & S_0}$
hold for $i=0,1$ by construction.
For $n=1$ commutativity of the diagrams of \eqref{eq:w_simp} is checked as
follows.  
$$
\xymatrix{
S_1 \ar[r]_-{y_{(1,0)}^{-1}} \ar[d]_-{\sigma_0} \ar@/^1.2pc/@{=}[rrr] 
\ar@{}[rd]|-{\eqref{eq:y_id}} &
S_1^{(1)}S_0 \ar@{=}[r] \ar[d]^-{\sigma_0^{(1)}\sigma_0} &
S_1^{(1)}S_0\ar[r]_-{q_1} \ar[d]^-{1\sigma_0} \ar@{}[rd]|-{\eqref{eq:q_sigma}}&
S_1 \ar[d]^-{\widetilde \sigma_0} \\
S_2 \ar[r]^-{y_{(2,0)}^{-1}} \ar@/_1.2pc/[rrr]_-{w_2} &
S_2^{(1)}S_1 \ar[r]^-{\partial_0^{(1)}1} &
S_1^{(1)}S_1 \ar[r]^-{q_2} &
S_1 \coten {S_0} S_1}\quad
\xymatrix{
S_1 \ar@{=}[rrr] \ar[rd]^-{u1} \ar[d]_-{\sigma_1} &&&
S_1 \ar[d]^-{\widetilde \sigma_1} \ar[ld]_-{u1} \\
S_2 \ar[r]^-{y_{(2,0)}^{-1}} \ar@/_1.2pc/[rrr]_-{w_2} &
S_2^{(1)}S_1 \ar[r]^-{\partial_0^{(1)}1} &
S_1^{(1)}S_1 \ar[r]^-{q_2} &
S_1 \coten {S_0} S_1}
$$
$$
\xymatrix@C=10pt@R=72pt{
S_2 \ar[r]^-{w_2} \ar[d]^(.35){\partial_0} \ar@{}[rd]|-{\eqref{eq:p1.wn}}&
S_1 \coten {S_0} S_1 \ar[d]_(.65){\widetilde \partial_0=p_1} \\
S_1 \ar@{=}[r] &
S_1}\ \ \ 
\xymatrix@C=10pt@R=10pt{
S_2 \ar[r]_-{y_{(2,0)}^{-1}} \ar[ddd]^(.35){\partial_1}
\ar@{}[rrrddd]|-{\eqref{eq:y_more}} \ar@/^1.2pc/[rrr]^-{w_2} &
S_2^{(1)}S_1 \ar[r]_-{\raisebox{-10pt}{${}_{\partial_0^{(1)}1=\partial_1^{(1)}1}$}} &
S_1^{(1)}S_1 \ar[r]_-{q_2} \ar@/_1.5pc/@{=}[rd] &
S_1 \coten {S_0} S_1 \ar[d]_-{q_2^{-1}}
\ar@/^1.1pc/[ddd]^(.65){\widetilde \partial_1} \\
&&& S_1^{(1)}S_1\ \ \ar[d]_-{p_{S_1}1} \\
&&& S_1^2 \ar[d]_-m \\
S_1 \ar@{=}[rrr] &&&
S_1 } \ \ \ 
\xymatrix@C=10pt@R=72pt{
S_2 \ar[r]_-{y_{(2,0)}^{-1}} \ar[d]^(.35){\partial_2}
\ar@/^1.2pc/[rrr]^-{w_2} &
S_2^{(1)}S_1 \ar[r]_-{\partial_0^{(1)}1} \ar[d]^-{p_I 1} \ar@{}[ld]|-{\eqref{eq:y_more}}  &
S_1^{(1)}S_1 \ar[r]_-{q_2} \ar[d]_-{p_I1} 
\ar@{}[rd]|-{\txt{\tiny \cite[(3.9)]{Bohm:Xmod_II}}}  &
S_1 \coten {S_0} S_1 \ar[d]_(.65){\widetilde \partial_2=p_2} \\
S_1 \ar@{=}[r] &
S_1 \ar@{=}[r] &
S_1 \ar@{=}[r] &
S_1 }
$$
Now assume that the first diagram of \eqref{eq:w_simp} commutes for some $n>0$ and all $0\leq i \leq n$. By the commutativity of the diagrams
$$
\xymatrix@C=13pt{
S_{n+1} \ar[rr]_(.2){y_{(n+1,0)}^{-1}} \ar[d]_-{\sigma_i} \ar@/^1.4pc/[rrrrrr]^-{w_{n+1}} \ar@{}[rd]|-{\eqref{eq:y_id}}&&
S_{n+1}^{(1)} S_n \ar[r]_-{\partial_0^{(1)}1} \ar[d]^-{1\sigma_i} \ar@/_1.2pc/[ld]_(.7){\sigma_i^{(1)} \sigma_i} &
\cdots  \ar[r]_-{\partial_0^{(1)}1}  &
S_1^{(1)} S_n \ar[r]_-{1w_n} \ar@{}[rd]|(.5){\textrm{(IH)}} \ar[d]^-{1\sigma_i} &
\raisebox{-5pt}{$S_{1}^{(1)} S_1^{\expcoten {S_0} n}$} \ar[r]_-{q_{n+1}}  \ar[d]^-{1\widetilde \sigma_i} 
\ar@{}[rd]|-{\eqref{eq:q_sigma}} &
S_1^{\expcoten {S_0} n+1} \ar[d]^-{\widetilde \sigma_i} \\
S_{n+2} \ar[r]^(.33){y_{(n+2,0)}^{-1}}  \ar@/_1.4pc/[rrrrrr]_-{w_{n+2}} &
S_{n+2}^{(1)} S_{n+1}  \ar[r]^(.48){\raisebox{12pt}{${}_{\partial_0^{(1)}1=\partial_i^{(1)}1}$}} &
S_{n+1}^{(1)} S_{n+1} \ar[r]^-{\partial_0^{(1)}1}  &
\cdots  \ar[r]^-{\partial_0^{(1)}1}  &
S_{1}^{(1)} S_{n+1} \ar[r]^-{1w_{n+1}}  &
S_{1}^{(1)} S_1^{\expcoten {S_0} n+1} \ar[r]^-{q_{n+2}} &
S_1^{\expcoten {S_0} n+2} \\
S_{n+1} \ar@{=}[rrr] \ar[d]_-{\sigma_{n+1}}  \ar[rd]^-{u1} &&&
S_{n+1} \ar[rrr]^-{w_{n+1}} \ar[d]^-{u1} &&&
S_1^{\expcoten {S_0} n+1} \ar[d]^-{\widetilde \sigma_{n+1}} \ar[ld]_-{u1} \\
S_{n+2} \ar[r]^(.33){y_{(n+2,0)}^{-1}}  \ar@/_1.4pc/[rrrrrr]_-{w_{n+2}} &
S_{n+2}^{(1)} S_{n+1}  \ar[r]^-{\partial_0^{(1)}1} &
\cdots  \ar[r]^-{\partial_0^{(1)}1}  &
S_{1}^{(1)} S_{n+1} \ar[rr]^-{1w_{n+1}}  &&
S_{1}^{(1)} S_1^{\expcoten {S_0} n+1} \ar[r]^-{q_{n+2}} &
S_1^{\expcoten {S_0} n+2} }
$$
(where the region marked by (IH) commutes by the induction hypothesis)
we conclude that the first diagram of \eqref{eq:w_simp} commutes for all $n>0$ and all $0\leq i \leq n$. 

Assume next that the second diagram of \eqref{eq:w_simp} commutes for some $n>0$ and all $0\leq j \leq n+1$. Then the following diagrams commute for all $0\leq j \leq n$.
\begin{equation}\label{eq:w_delta}
\xymatrix@C=19pt{
S_{n+2} \ar[r]_-{y_{(n+2,0)}^{-1}} \ar[d]_-{\partial_{j}} \ar@/^1.4pc/[rrrrrr]^(.65){w_{n+2}} \ar@{}[rrd]|-{\eqref{eq:y_id}} &
\raisebox{-5pt}{$S_{n+2}^{(1)} S_{n+1}$} \ar[r]_(.55){\raisebox{-12pt}{${}_{\partial_0^{(1)}1=\partial_j^{(1)}1}$}}   &
S_{n+1}^{(1)} S_{n+1} \ar[r]_-{\partial_0^{(1)}1}   \ar[d]^-{1\partial_j} &
\cdots  \ar[r]_-{\partial_0^{(1)}1}  &
S_1^{(1)} S_{n+1} \ar[r]_-{\raisebox{-10pt}{${}_{1w_{n+1}}$}} \ar[d]^-{1\partial_j} \ar@{}[rd]|(.52){\textrm{(IH)}}&
\raisebox{-7pt}{$S_{1}^{(1)} S_1^{\expcoten {S_0} n+1}$} \ar[r]_-{q_{n+2}}  \ar[d]^-{1\widetilde \partial_j} 
\ar@{}[rd]|-{\eqref{eq:q_delta}}  &
S_1^{\expcoten {S_0} n+2} \ar[d]^-{\widetilde \partial_{j}} \\
S_{n+1} \ar[rr]^-{y_{(n+1,0)}^{-1}}  \ar@/_1.4pc/[rrrrrr]_(.35){w_{n+1}} &&
S_{n+1}^{(1)} S_{n}  \ar[r]^-{\partial_0^{(1)}1} &
\cdots  \ar[r]^-{\partial_0^{(1)}1}  &
S_{1}^{(1)} S_{n} \ar[r]^-{1w_{n}}  &
S_{1}^{(1)} S_1^{\expcoten {S_0} n} \ar[r]^-{q_{n+1}} &
S_1^{\expcoten {S_0} n+1}}
\end{equation}
\begin{equation}\label{eq:w_delta_last}
\xymatrix@C=26pt{
S_{n+2} \ar[rr]_-{y_{(n+2,0)}^{-1}} \ar[d]_-{\partial_{n+2}} \ar@/^1.4pc/[rrrrrr]^(.65){w_{n+2}} \ar@{}[rrd]|-{\eqref{eq:y_more}} &&
S_{n+2}^{(1)} S_{n+1} \ar[r]_-{\partial_0^{(1)}1} \ar[d]^-{p_I1}  &
\cdots  \ar[r]_-{\partial_0^{(1)}1}  &
S_1^{(1)} S_{n+1} \ar[r]_-{\raisebox{-10pt}{${}_{1w_{n+1}}$}} \ar[d]^-{p_I1} &
\raisebox{-7pt}{$S_{1}^{(1)} S_1^{\expcoten {S_0} n+1}$} \ar[r]_-{q_{n+2}}  \ar[d]^-{p_I1} 
\ar@{}[rd]|-{
\txt{\tiny \cite[(3.9)]{Bohm:Xmod_II}}}  &
S_1^{\expcoten {S_0} n+2} \ar[d]_(.3){p_{2\dots n+2}}^(.3){\widetilde \partial_{n+2}} |(.3)= \\
S_{n+1} \ar@{=}[rr] &&
S_{n+1} \ar@{=}[rr] &&
S_{n+1} \ar[r]_-{w_{n+1}} &
S_1^{\expcoten {S_0} n+1} \ar@{=}[r] &
S_1^{\expcoten {S_0} n+1}  }
\end{equation}
The missing case $j=n+1$ follows by the commutativity of the following diagrams whose vertical arrows are joint monomorphisms.
\begin{equation}\label{eq:w_delta_penult}
\xymatrix@C=15pt@R=20pt{
&& 
S_1^{\expcoten {S_0} n+2} \ar[rr]^-{\widetilde \partial_{n+1}} \ar[d]^-{\widetilde \partial_0} &&
S_1^{\expcoten {S_0} n+1} \ar[d]_-{\widetilde \partial_0} \ar@/^1.2pc/[ddd]^-{p_1} \\
&& S_1^{\expcoten {S_0} n+1} \ar[rr]^-{\widetilde \partial_{n}} \ar@{}[dd]|-{\textrm{(IH)}} &&
 S_1^{\expcoten {S_0} n} \ar[d]|(.37)*+<2pt>{{}_{\widetilde \partial_0}} 
\ar@/_1.1pc/[dd]_-{p_1} \\
&&& \ar@{}[d]|-{\eqref{eq:p1.wn}} &
\vdots \ar[d]|-*+<5pt>{{}_{\widetilde \partial_0}} \\
S_{n+2} \ar@/^1.5pc/[rruuu]^-{w_{n+2}} \ar@/_1.2pc/[rrdd]_-{\partial_{n+1}} \ar[r]^-{\partial_0} 
\ar@{}[rruuu]|-{\eqref{eq:w_delta}}&
S_{n+1} \ar[r]^-{\partial_{n}} \ar@/^.9pc/[ruu]^-{w_{n+1}} &
S_n \ar[r]^-{\partial_0} \ar@/^1.3pc/[rruu]^-{w_n} \ar@{}[rrdd]|-{\eqref{eq:p1.wn}} &
\cdots \ar[r]^-{\partial_0} &
S_1 \\
\\
&& S_{n+1} \ar[uu]_-{\partial_0} \ar[rr]_-{w_{n+1}} &&
S_1^{\expcoten {S_0} n+1} \ar[uu]_-{p_1} }
\xymatrix@C=15pt@R=20pt{
&& 
S_1^{\expcoten {S_0} n+2} \ar[r]^-{\widetilde \partial_{n+1}} 
\ar[dd]^-{\widetilde \partial_{n+2}} &
S_1^{\expcoten {S_0} n+1} 
\ar[ddd]_-{\widetilde \partial_{n+1}}^-{p_{2\dots n+1}}|-*+<5pt>{{}_=} \\
\\
&& S_1^{\expcoten {S_0} n+1} \ar@/^.8pc/[rd]^(.4){\widetilde \partial_{n+1}}
\ar@{}[dd]|-{\textrm{(IH)}} \\
S_{n+2} \ar@/^1.5pc/[rruuu]^-{w_{n+2}} \ar@/_1.2pc/[rrdd]_-{\partial_{n+1}} 
\ar[r]^-{\partial_{n+2}} \ar@{}[rruuu]|-{\eqref{eq:w_delta_last}} &
S_{n+1} \ar@/_.8pc/[rd]_-{\partial_{n+1}} \ar@/^.8pc/[ru]^-{w_{n+1}} &&
S_1^{\expcoten {S_0} n} \\
&& S_n \ar@/_.8pc/[ru]_-{w_n} \ar@{}[rd]|-{\textrm{(IH)}} \\
&& S_{n+1} \ar[u]_(.35){\partial_{n+1}} \ar[r]_-{w_{n+1}} &
S_1^{\expcoten {S_0} n+1} 
\ar[uu]_(.4){p_{2\dots n+1}}|(.4)*+<5pt>{{}_=}^(.4){\widetilde \partial_{n+1}} }
\end{equation}
This proves that the second diagram of \eqref{eq:w_simp} commutes for all
stated values of $n$ and $j$ and thus $w$ is a simplicial morphism. 

Finally, naturality of $w$ is proven by induction in $n$. 
For any simplicial monoid morphism 
$\{\xymatrix@C=12pt{S_n \ar[r]^-{f_n} & S'_n}\}_{n\geq 0}$ 
and $n=1$ the first diagram of 
$$
\xymatrix@R=26pt{
S_n \ar[r]^-{w_n} \ar[d]_-{f_n} &
S_1^{\expcoten {S_0} n} \ar[d]^-{f_1^{\diagcoten n}} \\
S'_n \ar[r]_-{w'_n} &
S_1^{\prime \expcoten {S'_0} n}}
\xymatrix@C=20pt{
S_{n+1} \ar[r]_-{y_{(n+1,0)}^{-1}} \ar[d]_-{f_{n+1}} \ar@{}[rd]|-{\eqref{eq:y_nat}}
\ar@/^1.2pc/[rrrrr]^-{w_{n+1}} &
\raisebox{-5pt}{$S_{n+1}^{(1)} S_n \ar[r]_-{\partial_0^{(1)}1}$} 
\ar[d]^-{f_{n+1}^{(1)}f_n} &
\cdots \ar[r]_-{\partial_0^{(1)}1} &
S_{1}^{(1)} S_n \ar[r]_-{1w_n}\ar[d]_-{f_{1}^{(1)}f_n} \ar@{}[rd]|-{\textrm{(IH)}} &
\raisebox{-5pt}{$S_{1}^{(1)} S_1^{\expcoten {S_0} n}$} \ar[r]_-{q_{n+1}} 
\ar[d]^-{f_{1}^{(1)}f_1^{\diagcoten n}} &
S_1^{\expcoten {S_0} n+1} \ar[d]^-{f_1^{\diagcoten n+1}} \\
S'_{n+1} \ar[r]^-{y_{(n+1,0)}^{\prime -1}} \ar@/_1.2pc/[rrrrr]_-{w'_{n+1}} &
S_{n+1}^{\prime (1)} S'_n \ar[r]^-{\partial_0^{\prime (1)}1} &
\cdots \ar[r]^-{\partial_0^{\prime (1)}1} &
S_{1}^{\prime (1)} S'_n \ar[r]^-{1w'_n} &
S_{1}^{\prime (1)} S_1^{\prime \expcoten {S_0} n} \ar[r]^-{q'_{n+1}} &
S_1^{\prime \expcoten {S_0} n+1} }
$$
evidently commutes.
If the first diagram commutes for some non-negative integer $n$ then so does
the second one too.
\end{proof}

The functor $\mathsf{Simp}^1\mathsf{Mon}_{\mathcal S}(\mathsf C) \to \mathsf{CatMon}_{\mathcal S}(\mathsf C)$ in the above proof can be composed with the functor $\mathsf{CatMon}_{\mathcal S}(\mathsf C) \to \mathsf{Xmod}_{\mathcal S}(\mathsf C)$ in the proof of \cite[Theorem 3.10]{Bohm:Xmod_II}. The resulting equivalence functor sends an object $S$ to 
$$
(S_0,S_1^{(1)},
\xymatrix@C=12pt{S_1^{(1)} \ar[r]^-{p_I} &I},
\xymatrix@C=12pt{S_1^{(1)} \ar[r]^-{D_0} & S_0},
\xymatrix@C=20pt{S_0S_1^{(1)} \ar[r]^-{\sigma_0p_{S_1}} & S_1^2 \ar[r]^-m & S_1 \ar[r]^-{q_1^{-1}} & S_1^{(1)} S_0}),
$$
where $D_0$ is the same morphism in Proposition \ref{prop:Moore}.

\begin{example} \label{ex:Simp_groupoid}
As in \cite[Example 1.2]{Bohm:Xmod_II}, take the (evidently admissible and monoidal) class  of all spans in the category $\mathsf C$ of spans over a given set $X$.
The equivalent categories of \cite[Example 3.11]{Bohm:Xmod_II} are equivalent
also to the following category.
\begin{itemize}
\item[{$\mathsf{Simp}$}]\hspace{-.3cm} $^1\mathsf{Mon}(\mathsf C)$ 
whose \\ 
\underline{objects} are simplicial categories $S$ such that the object set of $S_n$ for each $n\geq 0$ is the given set $X$ and the following conditions hold.
\begin{itemize}
\item[{(a)}] The Moore complex of $S$ has length 1.
\item[{(c)}] The morphisms $q_n$ and $y_{(n,k)}$ of Theorem \ref{thm:SCat_vs_Simp} are
invertible for all $0\leq k<n$ (equivalently, $q_1$, $y_{(n,0)}$ and $y_{(n,1)}$ are
invertible for all $0<n$). 
\end{itemize}
(There is no condition (b) because we are working with the class of all spans.)
\underline{morphisms} are the simplicial functors.
\end{itemize}
This category contains as a full subcategory the category of simplicial
groupoids of Moore length 1; which is therefore equivalent to the category of
internal categories in the category of groupoids; and also to the category of
crossed modules of groupoids, see \cite[Example 3.11]{Bohm:Xmod_II}.
\end{example}

\begin{example} \label{ex:Simp_CoMon}
Let $\mathsf M$ be a symmetric monoidal category in which equalizers exist and are preserved by taking the monoidal product with any object.
Take $\mathsf C$ to be the category of comonoids in $\mathsf M$ with the monoidal admissible class $\mathcal S$ in \cite[Example 2.3]{Bohm:Xmod_I} of spans in $\mathsf C$.
The equivalent categories of \cite[Example 3.12]{Bohm:Xmod_II} are also equivalent
to the category 
\begin{itemize}
\item[{$\mathsf{Simp}$}]\hspace{-.3cm} $^1\mathsf{Mon}_{\mathcal S}(\mathsf C)$ 
whose \\ 
\underline{objects} are simplicial bimonoids $S$ in $\mathsf M$ such that 
\begin{itemize}
\item[{(a)}] For all $n>0$ and for
$\widehat \partial_i:=\xymatrix@C=10pt{
S_n\ar[r]^-\delta &
S_n^2 \ar[r]^-{\delta 1} &
S_n^3 \ar[rr]^-{1\partial_i1} &&
S_nS_{n-1}S_n}$ for $0<i\leq n$,
\begin{equation}\label{eq:Moore1_CoMon}
\xymatrix@C=25pt{
I \ar[r]^-u &
S_n \ar@/^2.1pc/[rrr]^-{\widehat \partial_n}
\ar@/^1.4pc/[rrr]|-{\,\widehat\partial_{n-1}\,}_-\vdots
\ar@/_.7pc/[rrr]|-{\,\widehat\partial_{1}\,}
\ar@/_1.4pc/[rrr]_-{1u1.\delta} &&&
S_nS_{n-1}S_n}
\end{equation}
is a joint equalizer in $\mathsf M$ (that is; the Moore complex of $S$ has length 1, see Examples \ref{ex:ass_Moore_CoMon} and \ref{ex:zero_CoMon}). 
\item[{(b)}] $\partial_0 1.\delta=\partial_0 1.c.\delta$ and 
$\partial_1 1.\delta=\partial_1 1.c.\delta$.
\item[{(c)}] The morphisms $q_n$ and $y_{(n,k)}$ of Theorem
  \ref{thm:SCat_vs_Simp} are invertible for $0\leq k<n$ (equivalently, $q_1$, $y_{(n,0)}$ and $y_{(n,1)}$ are invertible for all $0<n$).
\end{itemize}
\underline{morphisms} are the simplicial bimonoid morphisms.
\end{itemize}
\end{example}

As a simple consequence we obtain the following result in \cite{Emir}.

\begin{proposition}
Let $\mathsf M$ be a symmetric monoidal category in which equalizers exist and are preserved by taking the monoidal product with any object.
Take $\mathsf C$ to be the category of comonoids in $\mathsf M$ with the monoidal admissible class $\mathcal S$ in \cite[Example 2.3]{Bohm:Xmod_I} of spans in $\mathsf C$.
The equivalent categories in \cite[Example 3.12]{Bohm:Xmod_II} and in Example
\ref{ex:Simp_CoMon} have equivalent full subcategories as follows.
\begin{itemize}
\item The full subcategory of $\mathsf{CatMon}_{\mathcal S}(\mathsf C)$ for
  whose objects  
$\xymatrix@C=20pt{
B \ar@{ >->}[r]|(.55){\, i\, } &
A \ar@{->>}@<-4pt>[l]_-s  \ar@{->>}@<4pt>[l]^-t &
A\coten B A \ar[l]_-d}$ 
both $A$ and $B$ are cocommutative Hopf monoids in $\mathsf M$.
\item The full subcategory of $\mathsf{Xmod}_{\mathcal S}(\mathsf C)$ for
whose objects 
$(B,Y,\xymatrix@C=12pt{BY \ar[r]^-l & Y},\xymatrix@C=12pt{Y \ar[r]^-k & B})$
both $Y$ and $B$ are cocommutative Hopf monoids in $\mathsf M$.
\item The full subcategory of $\mathsf{ReflGraphMon}_{\mathcal S}(\mathsf C)$
  for whose objects  
$\xymatrix@C=20pt{
B \ar@{ >->}[r]|(.55){\, i\, } &
A \ar@{->>}@<-4pt>[l]_-s  \ar@{->>}@<4pt>[l]^-t}$ 
the following conditions hold.
\begin{itemize}
\item $A$ and $B$ are cocommutative Hopf monoids (with antipodes $z$)
\item for the morphisms 
$$
\overrightarrow s:=\xymatrix@C=10pt{
A \ar[r]^-\delta & A^2 \ar[r]^-{1s} & AB \ar[r]^-{1z} & AB \ar[r]^-{1i} & A^2 \ar[r]^-m & A}, \ 
\overleftarrow t:=\xymatrix@C=10pt{
A \ar[r]^-\delta & A^2 \ar[r]^-{t1} & BA \ar[r]^-{z1} & BA \ar[r]^-{i1} & A^2 \ar[r]^-m & A}
$$
the following diagram commutes.
\begin{equation} \label{eq:vil_d}
\xymatrix{
A^2 \ar[r]^-{\overrightarrow s \overleftarrow t}   \ar[d]_-{\overrightarrow s \overleftarrow t} &
A^2 \ar[r]^-c &
A^2 \ar[d]^-m \\
A^2 \ar[rr]_-m &&
A}
\end{equation}
\end{itemize}
\item The full subcategory of 
$\mathsf{Simp}^1\mathsf{Mon}_{\mathcal S}(\mathsf C)$ for whose objects $S$
  the following conditions hold. 
\begin{itemize}
\item $S_n$ is a cocommutative Hopf monoid in $\mathsf M$ for all $n\geq 0$. 
\item The Moore complex of $S$ has length 1; that is,  \eqref{eq:Moore1_CoMon} is a joint
  equalizer in $\mathsf M$ for all $n>1$. 
\end{itemize}
\end{itemize}
\end{proposition}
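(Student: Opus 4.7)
Since the categories $\mathsf{CatMon}_{\mathcal S}(\mathsf C)$, $\mathsf{Xmod}_{\mathcal S}(\mathsf C)$ and $\mathsf{Simp}^1\mathsf{Mon}_{\mathcal S}(\mathsf C)$ are already equivalent by \cite[Theorem 3.10]{Bohm:Xmod_II} and Theorem \ref{thm:SCat_vs_Simp}, my plan is to verify that each of the established equivalence functors restricts to a functor between the specified full subcategories. For the fourth category, $\mathsf{ReflGraphMon}_{\mathcal S}(\mathsf C)$, one uses the forgetful functor from $\mathsf{CatMon}_{\mathcal S}(\mathsf C)$ and checks that its essential image in the cocommutative Hopf setting is cut out precisely by condition \eqref{eq:vil_d}. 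Everything then reduces to tracking how the cocommutative Hopf condition propagates along each equivalence.

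For the truncation direction $\mathsf{Simp}^1\mathsf{Mon}_{\mathcal S}(\mathsf C)\to\mathsf{CatMon}_{\mathcal S}(\mathsf C)$ the claim is trivial: if every $S_n$ is cocommutative Hopf then so are $S_0=B$ and $S_1=A$. For the nerve in the opposite direction I would observe that $A^{\expcoten B n}$ is an iterated joint equalizer of bimonoid morphisms between cocommutative Hopf monoids, so by the argument of Example \ref{ex:y_Hopf} the antipodes restrict, endowing $A^{\expcoten B n}$ with a cocommutative Hopf structure. The form \eqref{eq:Moore1_CoMon} of the Moore length $1$ condition is then a direct consequence of the explicit description of $S_n^{(n)}$ in Example \ref{ex:ass_Moore_CoMon} together with Example \ref{ex:zero_CoMon}.

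The equivalence between $\mathsf{CatMon}_{\mathcal S}(\mathsf C)$ and $\mathsf{Xmod}_{\mathcal S}(\mathsf C)$ sends a relative category to a crossed module with $Y=A\coten B I$, which is an equalizer in the category of bimonoids in $\mathsf M$ and hence cocommutative Hopf whenever $A$ and $B$ are. Conversely, the inverse equivalence reconstructs $A$ from $B$ and $Y$ as a smash-type bimonoid whose antipode is assembled from those of $B$ and $Y$ using the action $l$ and the crossed-module compatibilities, so $A$ is cocommutative Hopf whenever $B$ and $Y$ are.

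The main obstacle is the $\mathsf{ReflGraphMon}_{\mathcal S}(\mathsf C)$ characterisation: I must show that a reflexive graph of cocommutative Hopf monoids underlies a relative category if and only if \eqref{eq:vil_d} commutes. The key idea is that $\overrightarrow s$ and $\overleftarrow t$, which use the antipode $z$ of $A$, are convolution projections of $A$ onto $\ker(s)$ and $\ker(t)$ respectively; in the cocommutative Hopf setting these projections give isomorphisms trivialising the pullback $A\coten B A$. Under these, the only candidate composition $d$ is forced to take the form $m.(\overrightarrow s\otimes 1)$ (or its symmetric variant), and \eqref{eq:vil_d} expresses precisely the compatibility required for $d$ to be a well-defined, associative and unital bimonoid morphism extending the reflexive graph to a relative category. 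Once this dictionary is set up, the remaining verifications reduce to routine applications of cocommutativity and the antipode axioms.
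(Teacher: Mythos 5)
Your plan inverts the division of labour on which the proposition rests, and as a result it spends its effort re-deriving what is simply quoted from elsewhere while never addressing the one step the paper actually proves. The equivalence of the first three subcategories --- including the characterisation of the relevant reflexive graphs by condition \eqref{eq:vil_d} --- is not established here at all: it is taken verbatim from \cite[Example 3.12]{Bohm:Xmod_II}. Your treatment of exactly this part (what you call the ``main obstacle'') is only a sketch: the assertion that \eqref{eq:vil_d} is precisely what makes the candidate composition $d$ well defined, associative, unital and multiplicative is the nontrivial content of that cited example, and ``routine applications of cocommutativity and the antipode axioms'' is not a proof of it. Either cite Part II, as the paper does, or supply that argument in full; similarly for your smash-product reconstruction of $A$ from $(B,Y)$ in the $\mathsf{Xmod}_{\mathcal S}(\mathsf C)$ comparison.

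The step that does require proof, and which your proposal never addresses, is that the fourth category --- simplicial cocommutative Hopf monoids in $\mathsf M$ whose Moore complex has length 1, with no further hypotheses (note that the bullet restates the length condition via \eqref{eq:Moore1_CoMon}; it is not meant to presuppose conditions (b) and (c)) --- actually lies inside the category of Example \ref{ex:Simp_CoMon}, i.e.\ inside $\mathsf{Simp}^1\mathsf{Mon}_{\mathcal S}(\mathsf C)$. Concretely: condition (b) of Example \ref{ex:Simp_CoMon} holds trivially by cocommutativity, and condition (c) holds because the morphisms $q_n$ are invertible by \cite[Proposition 3.13]{Bohm:Xmod_II} and the morphisms $y_{(n,k)}$ are invertible by Example \ref{ex:y_Hopf}. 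You invoke Example \ref{ex:y_Hopf} only to transport antipodes onto the nerve levels $A^{\expcoten B n}$, never to verify (c) for an arbitrary object of the fourth category; without that verification such an $S$ is not known to be an object of $\mathsf{Simp}^1\mathsf{Mon}_{\mathcal S}(\mathsf C)$, the truncation functor of Theorem \ref{thm:SCat_vs_Simp} cannot even be applied to it, and the statement (in particular its specialisation to Emir's equivalence, which is the point of the proposition) is not obtained. Your observations that truncation preserves the cocommutative Hopf condition and that the nerve levels inherit a cocommutative Hopf structure are reasonable complements, but they do not replace this automaticity argument, which is the entire content of the paper's proof.
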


\begin{proof}
We need to show that the category listed last is a subcategory of the category in Example
\ref{ex:Simp_CoMon}. Condition (b) of Example \ref{ex:Simp_CoMon}
becomes trivial thanks to the cocommutativity assumption. Concerning condition
(c), the morphisms $q_n$ are invertible by \cite[Proposition 3.13]{Bohm:Xmod_II}
and the morphisms $y_{(n,k)}$ are invertible by Example \ref{ex:y_Hopf}. 
\end{proof}


\bibliographystyle{plain}

\end{document}